\documentclass[a4paper]{article}
\usepackage{amsmath,amsthm,amsfonts}
\usepackage{amssymb}
\usepackage[T1]{fontenc}



\newcommand{\ip}[2]{\bigl\langle #1,\, #2\bigr\rangle}	

\usepackage{graphicx}
\usepackage{subfigure}
\usepackage{euscript}
\usepackage{units}
\usepackage{enumitem}
\usepackage[bookmarks=true,hidelinks]{hyperref}
\usepackage{cleveref}
\usepackage{bbm}
\usepackage{bm}
\usepackage[table,xcdraw]{xcolor}
\usepackage{xcolor}
\usepackage[utf8]{inputenc}
\usepackage{epstopdf}

\textwidth=14cm \oddsidemargin=1cm \evensidemargin=0cm		

\newtheorem{theorem}{Theorem}
\newtheorem{lemma}[theorem]{Lemma}
\newtheorem{proposition}[theorem]{Proposition}

\theoremstyle{definition}
\newtheorem{definition}[theorem]{Definition}
\newtheorem{remark}[theorem]{Remark}

\numberwithin{equation}{section}


\def\XXint#1#2#3{{\setbox0=\hbox{$#1{#2#3}{\int}$ }
\vcenter{\hbox{$#2#3$ }}\kern-.56\wd0}}

\renewcommand{\geq}{\geqslant}
\renewcommand{\leq}{\leqslant}

\renewcommand{\epsilon}{\varepsilon}
\newcommand{\eps} {\varepsilon}
\renewcommand{\phi}{\varphi}
\newcommand{\R}{\mathbb{R}}
\newcommand{\T}{\mathbb{T}}

\newcommand{\Z}{\mathbb{Z}}
\newcommand{\K}{\mathbb{K}}
\newcommand{\Meas}{{\EuScript{M}}}
\newcommand{\Prob}{{\EuScript{P}}}
\newcommand{\Lip}{{\textrm{Lip}}}
\newcommand{\TV}{{\textrm{TV}}}
\newcommand{\BV}{{\textrm{BV}}}

\DeclareMathOperator{\supp}{supp}

\newcommand{\ind}{\mathbbm{1}} 

\newcommand{\Dt}{{\Delta t}}
\newcommand{\Dx}{{\Delta x}}
\newcommand{\Dy}{{\Delta y}}
\newcommand{\xset}{{\EuScript{K}}}	
\newcommand{\iset}{{\EuScript{I}}}	
\newcommand{\hf}{{\unitfrac{1}{2}}}
\newcommand{\thf}{{\unitfrac{3}{2}}}
\newcommand{\imhf}{{i-\hf}}
\newcommand{\iphf}{{i+\hf}}
\newcommand{\jmhf}{{j-\hf}}
\newcommand{\jphf}{{j+\hf}}
\newcommand{\ipthf}{{i+\thf}}
\newcommand{\dd}{\,d}

\begin{document}
\title{A convergent finite volume method for the Kuramoto equation and related non-local conservation laws}
\author{N. Chatterjee\thanks{Department of Mathematics, University of Oslo, Postboks 1053 Blindern, 0316 Oslo, Norway.} \and U. S. Fjordholm$^*$}
\maketitle

\begin{abstract}
We derive and study a Lax--Friedrichs type finite volume method for a large class of nonlocal continuity equations in multiple dimensions. We prove that the method converges weakly to the measure-valued solution, and converges strongly if the initial data is of bounded variation. Several numerical examples for the kinetic Kuramoto equation are provided, demonstrating that the method works well both for regular and singular data.
\end{abstract}

\section{Introduction}
The purpose of the present paper is to derive a non-oscillatory and convergent numerical method for a large class of nonlocal continuity equations of the form
\begin{equation}\label{eq:conslaw}
\begin{split}
\partial_t \mu + \nabla\cdot\big(V[\mu]\mu\big) = 0 &\qquad x\in\xset,\ t>0 \\
\mu(x,0) = \mu_0(x) & \qquad x\in\xset
\end{split}
\end{equation}
on some domain $\xset \subset \R^d$. Here, $\mu(x,t)$ is interpreted as a density of particles at time $t$, and $V[\mu]:\xset\to\R^d$ depends nonlocally on $\mu$, typically the convolution of $\mu$ with some kernel. Conservation laws of this form arise in a large variety of applications, such as the simulation of crowd dynamics, microbiology, flocking of birds, traffic flow and more \cite{Buck,Kuramoto,Winfree}. One particular instance of \eqref{eq:conslaw} which has been studied extensively is the so-called \emph{Kuramoto--Sakaguchi equation}, also called the \emph{kinetic Kuramoto equation},
\begin{equation}\label{eq:ks}
\begin{split}
\partial_{t}f + \partial_{\theta}(\omega[f]f) = 0 &\qquad (\theta,\Omega)\in\T\times\R,\ t>0 \\
f(\theta, \Omega, 0) = f_{0}(\theta, \Omega) &\qquad (\theta,\Omega)\in\T\times\R
\end{split}
\end{equation}
where $\T = \R/2\pi$ is the one-dimensional torus and
\[
\omega[f](\theta, \Omega, t):=\Omega - K\int_{\T}\mbox{sin}(\theta-\theta^*)\rho(\theta^*,t) \dd\theta^*, \qquad \rho(\theta,t) := \int_\R f(\theta, \Omega, t)\, d\Omega
\]
This equation arises as the mean-field limit of the \emph{Kuramoto equation}, a system of ordinary differential equations for coupled oscillators which was first studied by Winfree and Kuramoto \cite{KuramotoI,Kuramoto,Winfree}. The unknowns in the Kuramoto equation are the phase $\theta\in\T$ and the natural frequency $\Omega\in\R$ of each oscillator, and the interaction between the oscillators depends on the coupling strength $K>0$ and the relative difference in phase $\theta-\theta^*$ between pairs of oscillators. The mean-field limit as the number of oscillators goes to infinity is a probability distribution $f=f(\theta,\Omega,t)$ obeying the above nonlocal continuity equation; see e.g.~\cite{Dobrushin1979,Neunzert,Chiba}. See also the recent paper \cite{Carrillo} for some qualitative properties of \eqref{eq:ks}.

We will also let $g(\Omega,t):=\int_\T f(\theta,\Omega,t)\,d\theta$ denote the distribution function for natural frequencies. From \eqref{eq:ks} it is easily seen that $g$ is constant in time, $g(\Omega,t)\equiv g(\Omega)$. Here and elsewhere we will assume that $f_0$ (and hence also $g$) is compactly supported.

The above equations are valid for oscillators with non-identical natural frequencies. The situation is somewhat simpler in the case of identical oscillators, i.e.\ oscillators whose natural frequencies coincide. This corresponds to $g$ being a Dirac measure, and without loss of generality we can assume that $g=\delta$, the Dirac measure located at $\Omega = 0$. Consequently,
\[
f(\theta, \Omega, t) = \rho(\theta, t) \delta(\Omega), \qquad \omega[f]f = L[\rho]\rho, \qquad L[\rho](\theta,t):=-K\int_{\T}\mbox{sin}(\theta - \theta^*) \rho(\theta^*,t) \dd\theta^*.
\]
Therefore, \eqref{eq:ks} reduces to the following equation for $\rho$:
\begin{equation}\label{eq:ksident}
\begin{split}
\partial_t\rho + \partial_\theta(L[\rho]\rho) = 0 \\
\rho(\theta, 0) = \rho_{0}(\theta).
\end{split}
\end{equation}
Both equation \eqref{eq:ks} and \eqref{eq:ksident} are instances of general nonlocal conservation laws of the form \eqref{eq:conslaw}.

The purpose of the present paper is to derive and analyze a finite volume numerical method for a large class of equations of the form \eqref{eq:conslaw} (including the kinetic Kuramoto equations \eqref{eq:ks}, \eqref{eq:ksident}). In particular, we prove that the scheme converges strongly to the unique weak solution $\mu$ whenever $\mu_0\in L^1\cap \textrm{BV}$, and in all other cases converges in the sense of measures to the so-called \emph{measure-valued} solution $\mu$. We emphasize that the stability and convergence properties of the scheme are valid even when $\mu_0$ (and hence also the exact solution) has point-mass singularities.

In the recent works \cite{Amadori,AmadoriI}, Amadori et al.~designed and analyzed a front-tracking numerical method for the the Kuramoto--Sakaguchi equation for identical oscillators \eqref{eq:ksident}. In contrast to their method, our finite volume method works in any number of dimensions, does not require any regularity on the initial data, and does not impose ``entropy conditions'' on solutions. We also mention the work by Crippa and L\'ecureux-Mercier \cite{CriMer13} where well-posedness of a \emph{system} of nonlocal continuity equations of the form \eqref{eq:conslaw} is established. The extension of our finite volume scheme to such systems should be straightforward.

\subsection{Nonlocal conservation laws}
Nonlocal conservation laws of the form \eqref{eq:conslaw} were first studied by Neunzert \cite{Neunzert} and Dobrushin \cite{Dobrushin1979}. Using techniques which by now are standard, they showed existence and uniqueness of solutions of \eqref{eq:conslaw}. We will consider solutions which are weakly continuous maps $\mu : \R_+ \to \Meas(\xset)$, mapping time $t$ to probability measures $\mu_t\in \Prob(\xset)$. Here, $\Meas(\xset) = (C_0(\xset))^*$ is the space of bounded Radon measures on $\xset$, $\Prob(\xset)$ is the subset of probability measures, and by weakly continuous we mean that $t \mapsto \ip{\mu_t}{\phi} := \int_\xset \phi(x)\,d\mu_t(x)$ is continuous for every $\phi\in C_0(\xset)$. We define the \emph{1-Wasserstein} metric
\[
d_W(\nu_1,\nu_2):=\sup\left\{\int_\xset \psi(x)\,d(\nu_2-\nu_1)(x)\,:\, \psi\in C_b(\xset),\ \|\psi\|_{\Lip(\xset)}\leq 1 \right\}, \qquad \nu_1,\nu_2\in\Prob(\xset).
\]
It can be shown that $d_W$ metrices the topology of weak (or \emph{narrow}) convergence in $\Prob_1(\xset)$, the set of probability measures $\nu$ with finite first moment, $\int_\xset |x|\,d\nu(x)<\infty$ (see e.g.~\cite{Villani}).

\begin{definition}
$\mu \in C_w(\R_+; \Meas(\xset))$ is a \emph{measure-valued solution} of \eqref{eq:conslaw} if it satisfies \eqref{eq:conslaw} in the sense of distributions, i.e.
\begin{equation}\label{eq:measvalsoln}
\int_0^\infty\int_\xset \partial_t\phi(x,t) + V[\mu_t](x)\cdot\nabla\phi(x,t)\,d\mu_t(x)\,dt + \int_\xset \phi(x,0)\,d\mu_0(x) = 0
\end{equation}
for every $\phi\in C_c^\infty(\xset\times\R_+)$.
\end{definition}

Since $\mu$ is assumed to be weakly continuous in time, one can show that $\mu$ is a measure valued solution \emph{if and only if}
\begin{equation}\label{eq:weaksolndef2}
\ip{\mu_t}{\psi}-\ip{\mu_s}{\psi} = \int_{s}^{t}\int_\xset V[\mu_\tau](x)\cdot\nabla\psi(x)\,d\mu_\tau(x)\,d\tau
\end{equation}
for every $0\leq s<t$ and every $\psi\in C_c^\infty(\xset)$. We say that $\mu$ is a \emph{weak solution} of \eqref{eq:conslaw} if it is a measure-valued solution which is absolutely continuous with respect to Lebesgue measure.

We will henceforth assume that $\xset$ is of the form $\xset = \K_1\times\cdots\times\K_d$, where each $\K_i$ is either the torus $\T$ or the whole real line $\R$. For the well-posedness of \eqref{eq:conslaw} and the convergence of our numerical scheme, we also need some of the following assumptions on $V$:
\begin{enumerate}[label=(A\arabic*)]
\item\label{cond:linfbound} $\exists\ C_1>0$ such that $\|V[\nu]\|_{L^\infty(\xset)} \leq C_1$ for all $\nu\in\Prob(\xset)$
\item\label{cond:spacelipschitz} $\exists\ C_2>0$ such that $|V[\nu](x)-V[\nu](y)| \leq C_2|x-y|$ for all $x,y\in\xset$ and $\nu\in\Prob(\xset)$
\item\label{cond:lipschitz} $\exists\ C_3>0$ such that $\|V[\nu_1]-V[\nu_2]\|_{L^\infty(\xset)} \leq C_3d_W(\nu_1,\nu_2)$ for all $\nu_1,\nu_2\in\Prob(\xset)$
\item\label{cond:c2bound} $\exists\ C_4>0$ such that $\big\|\nabla V[\nu]\big\|_{\Lip(\xset)} \leq C_4$ for all $\nu\in\Prob(\xset)$.
\end{enumerate}

The main well-posedness result for the nonlocal conservation law is the following.
\begin{theorem}[Neunzert \cite{Neunzert}, Dobrushin \cite{Dobrushin1979}]\label{thm:conslaw}
Let $\mu_0\in\Prob_1(\xset)$ and let $V$ satisfy assumptions \ref{cond:linfbound}, \ref{cond:spacelipschitz}, \ref{cond:lipschitz}. Then there exists a unique measure-valued solution of \eqref{eq:conslaw}. This solution is Lipschitz in time, $d_W(\mu_t,\mu_s)\leq C_1|t-s|$. If $\mu_0 \in L^1(\xset)$ then $\mu_t\in L^1(\xset)$ for all $t>0$.
\end{theorem}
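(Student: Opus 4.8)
The plan is to construct the solution by the method of characteristics together with a fixed-point argument in a space of Wasserstein-continuous measure curves. Given a candidate trajectory $\nu\in C_w([0,T];\Prob_1(\xset))$, I would consider the non-autonomous ODE $\dot X(t)=V[\nu_t](X(t))$, $X(0)=x$. By \ref{cond:spacelipschitz} the field $V[\nu_t]$ is globally Lipschitz in $x$ uniformly in $t$, and by \ref{cond:linfbound} it is bounded, so Picard--Lindel\"of yields a unique global flow $X^\nu_t:\xset\to\xset$; running the ODE backward in time produces its inverse, so $X^\nu_t$ is a bi-Lipschitz homeomorphism with constant $e^{C_2 t}$. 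Define the map $\Gamma$ by $\Gamma[\nu]_t:=(X^\nu_t)_\#\mu_0$, the pushforward of the initial datum along the flow. Differentiating $\ip{(X^\nu_t)_\#\mu_0}{\psi}=\int_\xset\psi(X^\nu_t(x))\,d\mu_0(x)$ in $t$ and inserting $\dot X^\nu_t=V[\nu_t](X^\nu_t)$ shows that any fixed point $\mu=\Gamma[\mu]$ satisfies the weak formulation \eqref{eq:weaksolndef2}, hence is a measure-valued solution; conversely the representation is forced by the characteristics.

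The heart of the argument is that $\Gamma$ has a unique fixed point. I would equip $C_w([0,T];\Prob_1(\xset))$ with a Bielecki-type weighted metric $d_\lambda(\nu^1,\nu^2):=\sup_{t\in[0,T]}e^{-\lambda t}d_W(\nu^1_t,\nu^2_t)$, with $\lambda$ to be chosen large. Comparing the two flows, set $\delta(t):=\sup_x|X^{\nu^1}_t(x)-X^{\nu^2}_t(x)|$; then
\[
\frac{d}{dt}\bigl|X^{\nu^1}_t(x)-X^{\nu^2}_t(x)\bigr| \le \bigl|V[\nu^1_t](X^{\nu^1}_t(x))-V[\nu^1_t](X^{\nu^2}_t(x))\bigr| + \bigl|V[\nu^1_t](X^{\nu^2}_t(x))-V[\nu^2_t](X^{\nu^2}_t(x))\bigr|,
\]
where the first term is bounded by $C_2\delta(t)$ using \ref{cond:spacelipschitz} and the second by $C_3 d_W(\nu^1_t,\nu^2_t)$ using \ref{cond:lipschitz}. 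Gr\"onwall's inequality gives $\delta(t)\le C_3\int_0^t e^{C_2(t-s)}d_W(\nu^1_s,\nu^2_s)\,ds$. Since the dual definition of $d_W$ only tests $1$-Lipschitz functions, $d_W(\Gamma[\nu^1]_t,\Gamma[\nu^2]_t)\le\int_\xset|X^{\nu^1}_t(x)-X^{\nu^2}_t(x)|\,d\mu_0(x)\le\delta(t)$, and absorbing the exponential into the weight yields $d_\lambda(\Gamma[\nu^1],\Gamma[\nu^2])\le\frac{C_3}{\lambda-C_2}\,d_\lambda(\nu^1,\nu^2)$. Choosing $\lambda>C_2+C_3$ makes $\Gamma$ a contraction, so the Banach fixed-point theorem gives a unique solution on $[0,T]$; as $T$ is arbitrary and all constants are $T$-independent, this produces the global solution and its uniqueness.

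It remains to verify the two regularity statements. The temporal Lipschitz bound is immediate from \ref{cond:linfbound}: since $|\dot X^\mu_t(x)|=|V[\mu_t](X^\mu_t(x))|\le C_1$ we have $|X^\mu_t(x)-X^\mu_s(x)|\le C_1|t-s|$ for every $x$, and testing against $1$-Lipschitz $\psi$ gives $d_W(\mu_t,\mu_s)\le\int_\xset|X^\mu_t(x)-X^\mu_s(x)|\,d\mu_0(x)\le C_1|t-s|$. The same bound, applied from $t=0$, also shows that finite first moment is preserved, so $\Gamma$ indeed maps into $\Prob_1(\xset)$. For the $L^1$ statement, I use that $X^\mu_t$ is bi-Lipschitz: such a map sends Lebesgue-null sets to Lebesgue-null sets, so the pushforward of an absolutely continuous $\mu_0$ is again absolutely continuous, with density supplied by the change-of-variables formula, whence $\mu_t\in L^1(\xset)$ for all $t$.

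I expect the main obstacle to be the contraction estimate, specifically the clean separation above of the two sources of discrepancy between the flows---spatial variation of a \emph{fixed} field, controlled by \ref{cond:spacelipschitz}, versus variation of the field with the measure, controlled by \ref{cond:lipschitz}---followed by transferring the uniform flow bound $\delta(t)$ to a Wasserstein bound on the pushforwards. Some technical care is also needed to confirm that $t\mapsto V[\nu_t](x)$ is continuous in $t$ (uniformly in $x$) so that the characteristic ODE is genuinely well-posed; this follows from the weak continuity of $\nu$ together with \ref{cond:lipschitz}, since $\|V[\nu_t]-V[\nu_s]\|_{L^\infty}\le C_3\,d_W(\nu_t,\nu_s)$.
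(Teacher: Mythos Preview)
The paper does not actually prove this theorem: it is stated as a known result and attributed to Neunzert and Dobrushin, with no proof given in the text. So there is no ``paper's own proof'' to compare against here.

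That said, your outline is precisely the classical Neunzert--Dobrushin argument those references contain: build the characteristic flow for a frozen measure curve, push forward $\mu_0$, and close via a Banach fixed point in a Wasserstein-type metric, with the splitting of the flow discrepancy into the \ref{cond:spacelipschitz}-controlled spatial part and the \ref{cond:lipschitz}-controlled measure part being exactly the right mechanism. Your Bielecki weighting is a clean alternative to the short-time contraction-and-concatenation that is often presented; the computation $d_\lambda(\Gamma[\nu^1],\Gamma[\nu^2])\le \frac{C_3}{\lambda-C_2}\,d_\lambda(\nu^1,\nu^2)$ is correct.

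One point you pass over too quickly is the sentence ``conversely the representation is forced by the characteristics.'' Uniqueness of the fixed point of $\Gamma$ is immediate, but uniqueness among \emph{all} measure-valued solutions requires you to show that every distributional solution of the linear continuity equation with the (now fixed, Lipschitz) field $V[\mu_t]$ is necessarily the pushforward $(X^\mu_t)_\#\mu_0$. This is standard---e.g.\ by testing against $\psi\circ (X^\mu_t)^{-1}$ or via a duality argument with the backward transport equation---but it is a genuine step and should be stated rather than asserted. You should also record explicitly that $(C([0,T];\Prob_1(\xset)),d_\lambda)$ is a complete metric space and that $\Gamma$ maps it into itself (you do note the first-moment preservation, which is the key input). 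With those two remarks added, the argument is complete.
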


\begin{remark}
It is easy to see that both kinetic Kuramoto equations \eqref{eq:ks} and \eqref{eq:ksident} satisfy assumptions \ref{cond:linfbound}--\ref{cond:c2bound}, and are therefore well-posed by Theorem \ref{thm:conslaw}. Indeed, in the case of \eqref{eq:ks} we let $d=2$, $\xset = \T\times\R$ and $V[\nu] = \big(V^1[\nu],V^2[\nu]\big)$, where $V^2[\nu]\equiv 0$ and
\[
V^1[\nu](\theta,\Omega) = \begin{cases}
\Omega - K\int_\xset \sin(\theta-\theta^*)\,d\nu(\theta^*,\Omega^*) & \text{for } \Omega\in\supp g \\
0 & \text{otherwise.}
\end{cases}
\]
We can then choose $C_1 = K + \max\{|\Omega| : \Omega\in\supp g\}$, $C_2=1+K$ and $C_3=C_4=K$. In the case of \eqref{eq:ksident} we let $d=1$, $\xset=\T$, $V=L$ and $C_1=C_2=C_3=C_4=K$.
\end{remark}

``Kinetic'' PDEs \eqref{eq:conslaw} under our assumptions (A1)--(A3) are rather well-behaved under approximations. For instance, in a \emph{particle approximation} one approximates the solution as a convex combination of Dirac measures, $\mu_t^M = \sum_{i=1}^M \delta_{x_i(t)}\mu_i$, where $x_i(t)$ is the position of the $i$th particle and $\mu_i\in(0,\infty)$ is its mass. It is straightforward to see that $\mu^M$ is in fact a measure-valued solution of \eqref{eq:conslaw} provided $x_i(t)$ satisfy the system of ODEs
\begin{equation}\label{eq:particleapprox}
\frac{dx_i}{dt}(t) = V[\mu^M](x_i(t)).
\end{equation}
Assumptions \ref{cond:linfbound}, \ref{cond:spacelipschitz} guarantee that this system has a unique solution. Taking an approximating sequence $\mu^M_0$ converging weakly to $\mu_0$, assumption \ref{cond:lipschitz} guarantees that the limit is a measure-valued solution. Thus, if one can solve the system of ODEs \eqref{eq:particleapprox} then the question of convergence boils down to the approximation of the initial data by Dirac measures \cite{Neunzert,Dobrushin1979}. 

Although the dynamical system \eqref{eq:particleapprox} can be studied qualitatively in certain simple cases, it is computationally infeasible to solve \eqref{eq:particleapprox} in more realistic settings. In the next section we proceed to construct a simple, computationally efficient numerical approximation to the nonlocal PDE \eqref{eq:conslaw}.

\section{The Lax--Friedrichs scheme}

\subsection{Derivation of the method}
For the sake of simplicity we derive the method in one space dimension with either $\xset=\R$ or $\xset=\T$, and then simply state the method in multiple space dimensions (Section \ref{sec:multid}). We start by deriving a \emph{staggered} version of the method but---again for the sake of simplicity---we will only analyze the unstaggered version of this method.

Consider a mesh $\ldots < x_\imhf < x_\iphf < \ldots$, where $i$ run over $\iset:=\Z$ in the unbounded case $\xset=\R$, or over some finite set $\iset:=\{1,\dots,N_x\}$ in the periodic case $\xset=\T$, and such that $\bigcup_i [x_\imhf,x_\iphf)=\xset$. For simplicity we assume a uniform mesh, $x_\iphf-x_\imhf\equiv \Dx$. We will denote $x_i:=\frac{x_\imhf+x_\iphf}{2}$. Let $0=t^0<t^1<\dots<t^N=T$ be a partition of the time interval $[0,T]$ with uniform step size $t^{n+1}-t^n\equiv\Dt$. Assuming that we are given an approximate solution $\mu^n=\sum_i \delta_{x_i}\mu_i^n$ at time $t^n$, we compute an approximation $\mu^{n+1}$ at $t=t^{n+1}$ as follows. Let $\mu_t$ be the exact solution of 
\begin{equation}\label{eq:godunov}
\begin{split}
\partial_t \mu + \partial_x\big(V[\mu]\mu\big) = 0  &\qquad t^n<t<t^{n+1}\\
\mu_{t^n} = \mu^n.
\end{split}
\end{equation}
Define $\mu_\iphf^{n+1}:= \ip{\mu_{t^{n+1}}}{\psi_\iphf}$, where $\psi$ are the usual ``witch's hat'' finite element basis functions,
\[
\psi_\iphf(x) := \begin{cases}
\frac{x-x_\imhf}{\Dx} & x_\imhf\leq x<x_\iphf \\
\frac{x_\ipthf-x}{\Dx} & x_\iphf\leq x<x_\ipthf \\
0 & \text{else,}
\end{cases}
\]
and the index $i$ is taken over all $i\in\iset$ if $n$ is even and over $i\in\iset-\hf$ is $n$ is odd. The approximation $\mu^{n+1}$ at time $t^{n+1}$ is then defined to be the projected solution $\mu^{n+1} := \sum_i \mu_\iphf^{n+1}\delta_{x_\iphf}$. 

We can derive a simplified expression of $\mu_\iphf^{n+1}$ as follows. Since the initial data $\mu^n$ in \eqref{eq:godunov} is a convex combination of Dirac measures, the solution of \eqref{eq:godunov} can be written as $\mu_t = \sum_i \delta_{x_i(t)}\mu_i^n$, where $x_i$ solve the system of ODEs \eqref{eq:particleapprox}. If we assume the \emph{CFL condition}
\begin{equation}\label{eq:CFLstagg}
\frac{\Dt}{\Dx}\big\|V[\mu]\big\|_{L^\infty(\xset)} \leq \frac{1}{2}
\end{equation}
then the particle with position $x_i(t)$ stays in the interval $(x_\imhf,x_\iphf)$ for all $t\in[t^n,t^{n+1})$. In particular, the atoms $\delta_{x_i(t)}$ in $\mu_t$ stay away from the kinks in $\psi_\iphf$, and hence we may use the (non-differentiable) function $\psi_\iphf$ as a test function in the weak formulation \eqref{eq:weaksolndef2}. Using the fact that $\ip{\mu^n}{\psi_\iphf} = \frac{\mu_i^n+\mu_{i+1}^n}{2}$, we obtain
\begin{align*}
\mu_\iphf^{n+1} &= \frac{\mu_i^n+\mu_{i+1}^n}{2} + \int_{t^n}^{t^{n+1}}\int_\R V[\mu_t](x)\partial_x\psi(x)\,d\mu_t(x)dt \\
&= \frac{\mu_i^n+\mu_{i+1}^n}{2} - \frac{1}{\Dx}\int_{t^n}^{t^{n+1}}V[\mu_t](x_{i+1}(t))\mu_{i+1}^n - V[\mu_t](x_{i}(t))\mu_{i}^n\,dt.
\end{align*}
Approximating $V[\mu_t](x_{i}(t)) \approx V[\mu^n](x_{i})$ yields our final scheme,
\begin{equation}\label{eq:stagglaxfr}
\mu_\iphf^{n+1} = \dfrac{\mu_i^n+\mu_{i+1}^n}{2} - \Dt\dfrac{V[\mu^n]_{i+1}\mu_{i+1}^n - V[\mu^n]_i\mu_{i}^n}{\Dx}
\end{equation}
(where we denote $V[\mu^n]_i := V[\mu^n](x_i)$). The initial data is set as $\mu_i^0 = \ip{\mu_0}{\psi_i}$. We refer to \eqref{eq:stagglaxfr} as the \emph{staggered Lax--Friedrichs method}, after \cite{Tadmor}.

For the sake of simplicity, we will only consider an unstaggered version of \eqref{eq:stagglaxfr}, obtained by inserting an extra mesh point between all pairs of neighboring mesh points. The \emph{unstaggered Lax--Friedrichs method} is then
\begin{equation}\label{eq:unstagglaxfr}
\begin{split}
\mu_i^0 &= \ip{\mu_0}{\psi_i} \\
\mu_i^{n+1} &= \dfrac{\mu_{i-1}^n+\mu_{i+1}^n}{2} - \Dt\dfrac{V[\mu^n]_{i+1}\mu_{i+1}^n - V[\mu^n]_{i-1}\mu_{i-1}^n}{2\Dx}
\end{split}
\end{equation}
where $\mu^n = \sum_i \mu_i^n\delta_{x_i}$.

\subsection{Properties of the method}
In this section we prove several stability properties of the staggered Lax--Friedrichs method. Since the multi-dimensional method shares the same properties as the one-dimensional method, we will prove these properties for the one-dimensional method \eqref{eq:unstagglaxfr} and merely state the properties for the two-dimensional method \eqref{eq:unstagglaxfr2d} in Section \ref{sec:multid}.

\begin{proposition}\label{prop:stabprop1d}
Consider the unstaggered, one-dimensional Lax--Friedrichs method \eqref{eq:unstagglaxfr} and define the piecewise linear measure-valued map
\begin{equation}\label{eq:approxsolndef}
\mu_t^\Dx := \frac{t^{n+1}-t}{\Dt}\mu^n + \frac{t-t^n}{\Dt}\mu^{n+1} \qquad \text{for } t\in[t^n,t^{n+1})
\end{equation}
where $\mu^n := \sum_i \mu_i^n \delta_{x_i}$. Assume that $V$ satisfies condition \ref{cond:linfbound}, and that the {CFL} condition
\begin{equation}\label{eq:CFL}
\lambda_0 \leq \frac{\Dt}{\Dx} \leq C_1^{-1}
\end{equation}
is satisfied for some $\lambda_0>0$ (where $C_1$ is the constant in \ref{cond:linfbound}). Then for all $t\geq0$
\begin{enumerate}[label=(\roman*)]
\item\label{property:positive} $\mu^\Dx_t \geq 0$
\item\label{property:unitmass} $\|\mu^\Dx_t\|_{\Meas(\K)} = 1$
\item\label{property:finitepropspeed} if $\supp\mu_0\subset B_M(0)$ then $\supp\mu^\Dx_t\subset B_{M+\lambda_0^{-1}t}(0)$
\item\label{property:timecontwass} $d_W\big(\mu^\Dx_t,\mu^\Dx_s\big) \leq \lambda_0^{-1}|t-s|$ for all $s\geq0$.
\end{enumerate}
\end{proposition}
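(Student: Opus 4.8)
The plan is to prove \ref{property:positive}--\ref{property:timecontwass} first at the level of the discrete atoms $\mu_i^n$ and then transfer them to the interpolant $\mu_t^\Dx$, which is a convex combination of $\mu^n$ and $\mu^{n+1}$. Everything flows from rewriting the update \eqref{eq:unstagglaxfr} in the incremental form
\[
\mu_i^{n+1} = \tfrac12\bigl(1 + \tfrac{\Dt}{\Dx}V[\mu^n]_{i-1}\bigr)\mu_{i-1}^n + \tfrac12\bigl(1 - \tfrac{\Dt}{\Dx}V[\mu^n]_{i+1}\bigr)\mu_{i+1}^n,
\]
which displays $\mu^{n+1}$ as a linear combination of $\mu_{i\pm1}^n$ whose coefficients are governed by the CFL number $\tfrac{\Dt}{\Dx}V[\mu^n]_i$.

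For \ref{property:positive}, assumption \ref{cond:linfbound} and the upper CFL bound $\Dt/\Dx\le C_1^{-1}$ give $\tfrac{\Dt}{\Dx}|V[\mu^n]_i|\le1$, so both coefficients above are nonnegative; since $\mu_i^0=\ip{\mu_0}{\psi_i}\ge0$ (as $\psi_i\ge0$ and $\mu_0\ge0$), induction on $n$ yields $\mu_i^n\ge0$, whence $\mu_t^\Dx\ge0$. For \ref{property:unitmass} I note that the scheme is in conservation form: the averaging $\tfrac{\mu_{i-1}^n+\mu_{i+1}^n}{2}$ and the centered flux difference both preserve total mass, so summing over $i$ (the flux terms telescope; in the case $\xset=\R$ this is legitimate because only finitely many atoms are nonzero, by the finite stencil together with compact support of $\mu_0$) gives $\sum_i\mu_i^{n+1}=\sum_i\mu_i^n$. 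Since $\sum_i\mu_i^0=\ip{\mu_0}{\sum_i\psi_i}=\ip{\mu_0}{1}=1$ by the partition-of-unity property of the hat functions, induction gives unit mass at every step, and with \ref{property:positive} this yields $\|\mu_t^\Dx\|_{\Meas}=1$. Property \ref{property:finitepropspeed} is finite speed of propagation: the stencil of \eqref{eq:unstagglaxfr} couples $\mu_i^{n+1}$ only to $\mu_{i\pm1}^n$, so the discrete support can grow by at most one cell, i.e.\ by $\Dx$, per step; after $n$ steps it has grown by at most $n\Dx=\tfrac{t^n}{\Dt}\Dx\le\lambda_0^{-1}t^n$, where I used the \emph{lower} CFL bound $\Dx\le\lambda_0^{-1}\Dt$, the initial discretization enlarging $\supp\mu_0$ only by $O(\Dx)$, which is absorbed into the same estimate.

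The main work is \ref{property:timecontwass}, and the crux is the one-step estimate $d_W(\mu^n,\mu^{n+1})\le\lambda_0^{-1}\Dt$. I test against arbitrary $\psi$ with $\|\psi\|_{\Lip}\le1$, substitute the scheme, and reindex (summation by parts) to write $\ip{\mu^{n+1}-\mu^n}{\psi}=\sum_j\mu_j^n B_j$ with $B_j=\tfrac{\psi(x_{j+1})+\psi(x_{j-1})}{2}-\psi(x_j)+\tfrac{\Dt}{2\Dx}V[\mu^n]_j\bigl(\psi(x_{j+1})-\psi(x_{j-1})\bigr)$. Setting $a=\psi(x_{j+1})-\psi(x_j)$ and $b=\psi(x_j)-\psi(x_{j-1})$, this regroups as $B_j=\tfrac12\bigl[a(1+\tfrac{\Dt}{\Dx}V[\mu^n]_j)-b(1-\tfrac{\Dt}{\Dx}V[\mu^n]_j)\bigr]$; the Lipschitz bound gives $|a|,|b|\le\Dx$, the upper CFL bound makes both factors $1\pm\tfrac{\Dt}{\Dx}V[\mu^n]_j$ nonnegative with sum $2$, hence $|B_j|\le\Dx$, and with \ref{property:positive}--\ref{property:unitmass} we get $|\ip{\mu^{n+1}-\mu^n}{\psi}|\le\Dx\sum_j\mu_j^n=\Dx\le\lambda_0^{-1}\Dt$. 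The \emph{key point}, and the main obstacle, is that the numerical-diffusion part of $B_j$ is only $O(\Dx)$, not $O(\Dx^2)$ (take $\psi(x)=|x|$): this would be fatal for a genuine time-continuity estimate, but the \emph{lower} CFL bound converts $\Dx$ into $\lambda_0^{-1}\Dt$, which is exactly what the statement requires. Finally I upgrade to arbitrary times using that $t\mapsto\mu_t^\Dx$ is piecewise linear: for $s,t$ in the same interval, $\mu_t^\Dx-\mu_s^\Dx=\tfrac{t-s}{\Dt}(\mu^{n+1}-\mu^n)$ gives $d_W(\mu_t^\Dx,\mu_s^\Dx)\le\tfrac{|t-s|}{\Dt}\lambda_0^{-1}\Dt=\lambda_0^{-1}|t-s|$, and for general $s<t$ the triangle inequality along the intermediate grid points, together with the telescoping $(t^{m+1}-s)+\dots+(t-t^n)=t-s$, closes the estimate.
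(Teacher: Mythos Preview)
Your proof is correct and follows essentially the same approach as the paper's. The only cosmetic difference is in \ref{property:timecontwass}: the paper sums by parts onto the differences $\psi_{i+1}-\psi_i$ and then bounds the resulting expression using the CFL condition and $\|\mu^n\|_{\Meas}=1$, whereas you reindex onto the weights $\mu_j^n$ and bound the coefficient $B_j$ directly---both routes yield the same estimate $|\ip{\mu^{n+1}-\mu^n}{\psi}|\le\Dx\le\lambda_0^{-1}\Dt$.
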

\begin{proof}
Writing
\[
\mu_i^{n+1} = \mu_{i-1}^n\frac{1+\frac{\Dt}{\Dx}V[\mu^n]_{i-1}}{2} + \mu_{i+1}^n\frac{1-\frac{\Dt}{\Dx}V[\mu^n]_{i+1}}{2},
\]
it is clear that \eqref{eq:CFL} and \ref{cond:linfbound} ensure that $\mu_i^{n+1}$ is a convex combination of $\mu_{i-1}^n$ and $\mu_{i+1}^n$, whence \ref{property:positive} and \ref{property:unitmass} follow. From \eqref{eq:unstagglaxfr} we see that if $\mu_{i-1}^n=\mu_i^n=\mu_{i+1}^n=0$ then $\mu_i^{n+1}=0$. Hence, after $n$ time steps the support of $\mu_t^\Dx$ can have grown at most a distance $n\Dx \leq n\frac{\Dt}{\lambda_0} = \lambda_0^{-1}t^n$, and \ref{property:finitepropspeed} follows. For \ref{property:timecontwass}, it is clear that by the definition \eqref{eq:approxsolndef} of $\mu^\Dx$, it is enough to prove the claim for $s=t^n$, $t=t^{n+1}$. Let $\psi\in C_b(\K)$ be Lipschitz continuous and write $\psi_i=\psi(x_i)$. Then
\begin{align*}
\ip{\mu^{n+1}-\mu^n}{\psi} &= \sum_i \psi_i\left(\frac{\mu_{i-1}^n-2\mu_i^n+\mu_{i+1}^n}{2} - \frac{\Dt}{2\Dx}\big(V[\mu^n]_{i+1}\mu_{i+1}^n-V[\mu^n]_{i-1}\mu_{i-1}^n\big)\right) \\
\intertext{\textit{(summation by parts)}}
&= -\frac{1}{2}\sum_i (\psi_{i+1}-\psi_i)\left(\mu_{i+1}^n-\mu_i^n - \frac{\Dt}{\Dx}\big(V[\mu^n]_{i}\mu_{i}^n+V[\mu^n]_{i+1}\mu_{i+1}^n\big)\right) \\
&= \frac{1}{2}\sum_i (\psi_{i+1}-\psi_i)\left(\mu_{i}^n\left(1 + \frac{\Dt}{\Dx}V[\mu^n]_i\right) + \mu_{i+1}^n\left(\frac{\Dt}{\Dx}V[\mu^n]_{i+1}-1\right)\right) \\
&\leq \frac{1}{2}\Dx\|\psi\|_\Lip\sum_i \mu_{i}^n\left(1 + \frac{\Dt}{\Dx}V[\mu^n]_i\right) + \mu_{i+1}^n\left(1-\frac{\Dt}{\Dx}V[\mu^n]_{i+1}\right) \\
&= \Dx\|\psi\|_\Lip,
\end{align*}
the last two steps following from the CFL condition and \ref{property:unitmass}. Taking the supremum over $\psi$ with $\|\psi\|_\Lip\leq1$ yields \ref{property:timecontwass}.
\end{proof}

\subsection{Weak convergence of the method}
We split the proof of convergence of the numerical  method to the (unique) measure-valued solution into two parts. First we show that our method is consistent (Lemma \ref{lem:laxwendroff}), in the sense that \emph{if} the method converges, then the limit is the measure-valued solution. Next, we show that the method indeed converges either weakly (Theorem \ref{thm:convmeasval}) or strongly (Theorem \ref{thm:convstrongsoln}), depending on the assumptions on the velocity field $V$ and the initial data $\mu_0$.

\begin{lemma}\label{lem:laxwendroff}
Assume that $V$ satisfies conditions \ref{cond:spacelipschitz}, \ref{cond:lipschitz} and define $\mu^\Dx$ by \eqref{eq:approxsolndef}. Assume that $d_W(\mu_t^\Dx,\mu_t) \to 0$ as $\Dx\to0$ uniformly on bounded intervals $t\in[0,T]$, for some $\mu \in C_w([0,\infty);\Meas(\K))$. Then $\mu$ is the measure-valued solution of \eqref{eq:conslaw}.
\end{lemma}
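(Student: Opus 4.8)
The plan is to verify the weak formulation \eqref{eq:weaksolndef2} directly, by passing to the limit in the scheme \eqref{eq:unstagglaxfr}. Fix a test function $\psi\in C_c^\infty(\xset)$ and times $0\le s<t$, write $\psi_i:=\psi(x_i)$, and let $n_s,n_t$ be the time indices closest to $s,t$. Multiplying \eqref{eq:unstagglaxfr} by $\psi_i$, summing over $i$ and telescoping in $n$ gives $\ip{\mu^{n_t}}{\psi}-\ip{\mu^{n_s}}{\psi}=\sum_{n=n_s}^{n_t-1}\ip{\mu^{n+1}-\mu^n}{\psi}$. For each increment I reuse the summation-by-parts identity already obtained in the proof of Proposition \ref{prop:stabprop1d}, splitting it into a \emph{numerical-viscosity} part $\tfrac12\sum_i(\psi_{i+1}-2\psi_i+\psi_{i-1})\mu_i^n$ and a \emph{discrete-flux} part $\tfrac{\Dt}{2\Dx}\sum_i(\psi_{i+1}-\psi_i)\bigl(V[\mu^n]_i\mu_i^n+V[\mu^n]_{i+1}\mu_{i+1}^n\bigr)$.

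The viscosity part is harmless: since $\psi\in C^\infty$, $|\psi_{i+1}-2\psi_i+\psi_{i-1}|\le\|\psi''\|_\infty\Dx^2$, so by \ref{property:unitmass} each increment contributes $O(\Dx^2)$; summing the $\approx(t-s)/\Dt$ increments and using the CFL lower bound $\Dt\ge\lambda_0\Dx$ from \eqref{eq:CFL} leaves a total of order $\Dx\to0$. For the flux part I factor out $\Dt$ and shift the index on the second term so that the two-point averages produce the centered difference $\tfrac{\psi_{i+1}-\psi_{i-1}}{2\Dx}=\partial_x\psi(x_i)+O(\Dx^2)$; since $\mu^n=\sum_i\mu_i^n\delta_{x_i}$, this identifies the flux part with $\Dt\int_\xset V[\mu^n]\,\partial_x\psi\dd\mu^n$ up to an error controlled by $\|\psi''\|_\infty\Dx^2$, the boundedness \ref{cond:linfbound} of $V$ and \ref{property:unitmass}, again negligible after summation. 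Thus everything reduces to showing that the time Riemann sum $\sum_n\Dt\int_\xset V[\mu^n]\,\partial_x\psi\dd\mu^n$ converges to $\int_s^t\int_\xset V[\mu_\tau]\,\partial_x\psi\dd\mu_\tau\dd\tau$.

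This last step is the crux and the main obstacle, because both the integrand (through the nonlocal velocity) and the measure must be sent to their limits simultaneously. First I would replace $\mu^n=\mu^\Dx_{t^n}$ inside each time step by the interpolant $\mu^\Dx_\tau$ of \eqref{eq:approxsolndef}: property \ref{property:timecontwass} gives $d_W(\mu^\Dx_\tau,\mu^n)\le\lambda_0^{-1}\Dt$, so \ref{cond:lipschitz} bounds the change in $V$ while \ref{cond:spacelipschitz}, \ref{cond:linfbound} bound the error of integrating the fixed Lipschitz function $V[\mu^n]\partial_x\psi$ against $\mu^\Dx_\tau-\mu^n$; this turns the Riemann sum into $\int_s^t F(\tau)\dd\tau$ up to $o(1)$, where $F(\tau):=\int_\xset V[\mu^\Dx_\tau]\,\partial_x\psi\dd\mu^\Dx_\tau$. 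The heart of the argument is then the uniform bound $|F(\tau)-G(\tau)|\le C\,d_W(\mu^\Dx_\tau,\mu_\tau)$, with $G(\tau):=\int_\xset V[\mu_\tau]\,\partial_x\psi\dd\mu_\tau$, obtained from the splitting
\[
\int_\xset V[\mu^\Dx_\tau]\,\partial_x\psi\dd\mu^\Dx_\tau-\int_\xset V[\mu_\tau]\,\partial_x\psi\dd\mu_\tau
=\int_\xset\bigl(V[\mu^\Dx_\tau]-V[\mu_\tau]\bigr)\partial_x\psi\dd\mu^\Dx_\tau
+\int_\xset V[\mu_\tau]\,\partial_x\psi\dd(\mu^\Dx_\tau-\mu_\tau),
\]
where the first term is estimated by $C_3\|\partial_x\psi\|_\infty\,d_W(\mu^\Dx_\tau,\mu_\tau)$ via \ref{cond:lipschitz} and \ref{property:unitmass}, and the second by $\|V[\mu_\tau]\partial_x\psi\|_\Lip\,d_W(\mu^\Dx_\tau,\mu_\tau)$, the Lipschitz norm being bounded uniformly in $\tau$ through \ref{cond:linfbound} and \ref{cond:spacelipschitz}. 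Since $d_W(\mu^\Dx_\tau,\mu_\tau)\to0$ uniformly on $[s,t]$ by hypothesis, $\int_s^t|F-G|\dd\tau\to0$.

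Finally, the left-hand side converges: because $\mu^n=\mu^\Dx_{t^n}$, the hypothesis $d_W(\mu^\Dx_{t^n},\mu_{t^n})\to0$ (with $\psi$ Lipschitz and bounded) together with the weak continuity of $\mu$ and $t^{n_s}\to s$, $t^{n_t}\to t$ gives $\ip{\mu^{n_s}}{\psi}\to\ip{\mu_s}{\psi}$ and $\ip{\mu^{n_t}}{\psi}\to\ip{\mu_t}{\psi}$. Letting $\Dx\to0$ therefore yields \eqref{eq:weaksolndef2} for every $\psi\in C_c^\infty(\xset)$ and all $0\le s<t$; by the equivalence between \eqref{eq:measvalsoln} and \eqref{eq:weaksolndef2} recorded earlier, $\mu$ is the measure-valued solution of \eqref{eq:conslaw}.
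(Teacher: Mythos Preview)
Your proof is correct and follows the same Lax--Wendroff consistency strategy as the paper---multiply the scheme by a test function, sum by parts to move the discrete derivatives onto the test function, and pass to the limit using \ref{cond:spacelipschitz} and \ref{cond:lipschitz}---the only difference being that you work with a purely spatial $\psi$, telescope in $n$, and verify the time-sliced formulation \eqref{eq:weaksolndef2}, whereas the paper takes a space-time test function $\phi(x,t)$ and aims directly at \eqref{eq:measvalsoln}. One small remark: you invoke \ref{cond:linfbound} and the CFL lower bound $\Dt\ge\lambda_0\Dx$ (via Proposition~\ref{prop:stabprop1d}\ref{property:unitmass},\ref{property:timecontwass}), neither of which appears among the lemma's stated hypotheses; the paper's proof also tacitly needs $\Dx^2/\Dt\to0$ for the numerical-viscosity part of the time difference quotient to vanish, so this is a shared omission rather than a gap specific to your argument.
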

\begin{proof}
The convergence $d_W(\mu_t^\Dx,\mu_t) \to 0$ implies that $\mu_t\in\Prob(\xset)$ for all $t$. Let $\phi\in C_c^\infty(\R\times\R)$; we want to show that $\mu$ satisfies \eqref{eq:measvalsoln} with $\xset=\K$. Denoting $\phi_i^n=\phi(x_i,t^n)$, we multiply \eqref{eq:unstagglaxfr} by $\phi_i^n$ and sum over $i,n$:
\begin{align*}
0 &= \sum_{n=0}^\infty\sum_i \left(\phi_i^n\mu_i^{n+1}-\phi_i^n\frac{\mu_{i+1}^n+\mu_{i+1}^n}{2} + \Dt\phi_i^n\frac{V[\mu^n]_{i+1}\mu_{i+1}^n-V[\mu^n]_{i-1}\mu_{i-1}^n}{2\Dx}\right) \\
\intertext{\textit{(summation by parts)}}
&= -\Dt\sum_{n=0}^\infty\sum_i \frac{\frac{\phi_{i-1}^n+\phi_{i+1}^n}{2}-\phi_i^{n-1}}{\Dt}\mu_i^{n} - \Dt\sum_{n=0}^\infty\sum_i \frac{\phi_{i+1}^n-\phi_{i-1}^n}{2\Dx}V[\mu^n]_i\mu_i^n - \sum_i\phi_i^{-1}\mu_i^0 \\
&= -\int_0^\infty\int_{\K} \frac{\frac{\phi(x-\Dx,t)+\phi(x+\Dx,t)}{2}-\phi(x,t)}{\Dt} + \frac{\phi(x+\Dx,t)-\phi(x-\Dx,t)}{2\Dx}V[\mu^\Dx_t](x)\,d\mu_t^\Dx(x)dt \\
&\quad - \int_{\K}\phi(x,-\Dt)\,d\mu_0^\Dx(x). 
\end{align*}
By condition \ref{cond:lipschitz} we know that $V[\mu_t^\Dx] \to V[\mu_t]$ uniformly on $\R\times[0,T]$, and by condition \ref{cond:spacelipschitz}, the functions $V[\mu_t^\Dx]$ are uniformly Lipschitz. It follows that the above integral converges to
\[
-\int_0^\infty\int_{\K} \partial_t\phi(x,t) + \partial_x\phi(x,t)V[\mu_t](x)\,d\mu_t(x)dt - \int_{\K}\phi(x,0)\,d\mu_0(x)
\]
and the proof is complete.
\end{proof}

For the convergence proof we use the following compactness lemma, whose proof is postponed to the appendix.
\begin{lemma}\label{lem:compactness}
Let $K\subset\Prob_1(\R^d)$ be a bounded set, i.e.\ $\sup_{\mu\in K}d_W(\mu,\bar{\mu})<\infty$ for some $\bar{\mu}\in\Prob(\R^d)$. Then $K$ is relatively compact in the metric space $(\Prob_1(\R^d),d_W)$.
\end{lemma}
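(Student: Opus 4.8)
The statement is a relative-compactness result for the $1$-Wasserstein metric $d_W$, so the plan is to reduce it to the two standard ingredients that characterize $d_W$-precompactness on $\Prob_1(\R^d)$: narrow (weak) precompactness, together with uniform integrability of the first moments. The first step is to extract a uniform first-moment bound from the hypothesis. Since $x\mapsto\min(|x|,R)$ is bounded and $1$-Lipschitz, the definition of $d_W$ gives $\bigl|\int\min(|x|,R)\,d\mu-\int\min(|x|,R)\,d\bar\mu\bigr|\leq d_W(\mu,\bar\mu)\leq C:=\sup_{\mu\in K}d_W(\mu,\bar\mu)$ for every $\mu\in K$; letting $R\to\infty$ by monotone convergence yields $\int_{\R^d}|x|\,d\mu(x)\leq\int_{\R^d}|x|\,d\bar\mu(x)+C=:M$, uniformly over $\mu\in K$.

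Next I would establish narrow precompactness. The uniform moment bound and Markov's inequality give $\mu(\{|x|>R\})\leq M/R$ for every $\mu\in K$, so $K$ is tight, and Prokhorov's theorem then makes $K$ relatively compact in the narrow topology. Hence any sequence $(\mu_k)\subset K$ admits a subsequence $\mu_{k_j}$ converging narrowly to some $\mu\in\Prob(\R^d)$; testing against $\min(|x|,R)$ and using lower semicontinuity of the first moment shows $\int|x|\,d\mu\leq M$, so in fact $\mu\in\Prob_1(\R^d)$.

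The remaining step is to upgrade this narrow convergence to convergence in $d_W$, and here I would argue through the dual formula. Normalizing each competitor so that $\psi(0)=0$ (which leaves $\int\psi\,d(\mu_{k_j}-\mu)$ unchanged and forces $|\psi(x)|\leq|x|$), I would split the integral into its contribution on a ball $B_R$ and a tail. On $\overline{B_R}$ the admissible $\psi$ form an equibounded, equi-Lipschitz family, so by Arzel\`a--Ascoli the supremum over $\psi$ of the ball-contribution is governed by the narrow convergence $\mu_{k_j}\rightharpoonup\mu$ and tends to $0$ as $j\to\infty$ for each fixed $R$, while the tail is dominated by $\int_{|x|>R}|x|\,d(\mu_{k_j}+\mu)$. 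Sending $j\to\infty$ and then $R\to\infty$ would yield $d_W(\mu_{k_j},\mu)\to0$, which is the desired conclusion.

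The hard part is the control of the tail term: to make $\int_{|x|>R}|x|\,d\mu_{k_j}$ small uniformly in $j$, one needs the first moments of $K$ to be \emph{uniformly integrable}, i.e.\ $\lim_{R\to\infty}\sup_{\mu\in K}\int_{|x|>R}|x|\,d\mu=0$, which is strictly stronger than the uniform bound obtained in the first step. Preventing escape of mass --- and therefore of first moment --- to infinity is thus the crux of the argument; I expect the proof to rest on extracting this uniform integrability, and in the application to the approximate solutions $\mu^\Dx_t$ it should come for free from the uniformly bounded supports guaranteed by the finite-propagation-speed property.
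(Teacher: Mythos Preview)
Your diagnosis is correct, and in fact sharper than the paper's own argument. The lemma as stated is \emph{false}: $d_W$-boundedness of $K$ amounts only to a uniform bound on first moments, whereas $d_W$-precompactness also requires those first moments to be uniformly integrable. A counterexample on $\R$ is $K=\{\mu_n:n\geq1\}$ with $\mu_n=(1-\tfrac1n)\delta_0+\tfrac1n\delta_n$: here $d_W(\mu_n,\delta_0)=\int|x|\,d\mu_n=1$ for every $n$, so $K$ is $d_W$-bounded, yet $\mu_n\to\delta_0$ narrowly while $d_W(\mu_n,\delta_0)\equiv1$, so no subsequence converges in $d_W$. The tail condition $\sup_{\mu\in K}\int_{|x|>R}|x|\,d\mu\to0$ that you flag as missing is exactly what fails.

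The paper's proof follows essentially the same two steps as yours---tightness (obtained via a Lipschitz cutoff rather than Markov's inequality) and then Prokhorov---but concludes by asserting that ``narrow convergence is equivalent to $d_W$-convergence'', which is precisely the step that is not valid on $\Prob_1(\R^d)$ without the uniform integrability you identify. Your closing remark is the right repair: in the only place the lemma is invoked, the approximations $\mu_t^{\Dx}$ have uniformly bounded support by the finite-propagation-speed property, so all measures live in a fixed compact set, and on a compact set narrow and $d_W$-convergence genuinely coincide.
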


\begin{theorem}\label{thm:convmeasval}
Assume that $V$ satisfies conditions \ref{cond:linfbound}, \ref{cond:spacelipschitz}, \ref{cond:lipschitz} and let $\mu_0\in\Prob(\K)$ have compact support. Assume that the CFL condition \eqref{eq:CFL} is satisfied. Then the Lax--Friedrichs method \eqref{eq:unstagglaxfr} converges weakly to the measure-valued solution of \eqref{eq:conslaw}. More precisely, if $\mu^\Dx$ if given by \eqref{eq:approxsolndef} then
\[
\sup_{t\in[0,T]}d_W\big(\mu^\Dx_t,\mu_t\big) \to 0 \qquad \text{as } \Dx \to 0
\]
for any $T>0$.
\end{theorem}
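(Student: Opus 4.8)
The plan is to combine the uniform stability estimates of Proposition~\ref{prop:stabprop1d} with an Arzel\`a--Ascoli compactness argument to extract a uniformly convergent subsequence, to identify its limit via the consistency Lemma~\ref{lem:laxwendroff}, and finally to invoke the uniqueness part of Theorem~\ref{thm:conslaw} to promote this to convergence of the whole family. Fix $T>0$ and regard each approximation $\mu^\Dx=(\mu_t^\Dx)_{t\in[0,T]}$ as an element of $C([0,T];(\Prob_1(\R),d_W))$, recalling that $(\Prob_1(\R),d_W)$ is a complete metric space.

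First I would verify the two hypotheses of the Arzel\`a--Ascoli theorem for maps from the compact interval $[0,T]$ into $(\Prob_1(\R),d_W)$. Equicontinuity is immediate from property~\ref{property:timecontwass}, which gives the Lipschitz bound $d_W(\mu_t^\Dx,\mu_s^\Dx)\leq\lambda_0^{-1}|t-s|$ uniformly in $\Dx$. For pointwise relative compactness, note that since $\mu_0$ has compact support, property~\ref{property:finitepropspeed} keeps all measures $\mu_t^\Dx$ with $t\in[0,T]$ supported in a single fixed ball $B_R(0)$, $R=M+\lambda_0^{-1}T$; together with properties~\ref{property:positive}--\ref{property:unitmass} this shows they form a subset of $\Prob_1(\R)$ bounded in $d_W$ (for instance $d_W(\mu_t^\Dx,\delta_0)\leq R$), so Lemma~\ref{lem:compactness} makes this set relatively compact. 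Arzel\`a--Ascoli then yields a subsequence $\Dx_k\to0$ and a limit $\mu\in C([0,T];(\Prob_1(\R),d_W))$ with $\sup_{t\in[0,T]}d_W(\mu_t^{\Dx_k},\mu_t)\to0$. (On the torus $\K=\T$ this step is even simpler, since $\Prob(\T)$ is itself compact.)

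Next, a diagonal argument over $T=1,2,\dots$ produces a single subsequence along which the convergence is uniform on every bounded time interval, so Lemma~\ref{lem:laxwendroff} applies and identifies $\mu$ as a measure-valued solution of \eqref{eq:conslaw}; since $\mu_0^\Dx=\sum_i\ip{\mu_0}{\psi_i}\delta_{x_i}\to\mu_0$ weakly, it carries the correct initial data. By the uniqueness statement in Theorem~\ref{thm:conslaw} this measure-valued solution is unique, hence independent of the chosen subsequence. To upgrade subsequential convergence to convergence of the full family, I would argue by contradiction: if $\sup_{t\in[0,T]}d_W(\mu_t^\Dx,\mu_t)\not\to0$, there would be $\eps>0$ and a sequence $\Dx_j\to0$ with $\sup_{t\in[0,T]}d_W(\mu_t^{\Dx_j},\mu_t)\geq\eps$; but the relative compactness established above lets us extract a further subsequence converging uniformly to a measure-valued solution, which by uniqueness equals $\mu$, contradicting the lower bound. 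Therefore $\sup_{t\in[0,T]}d_W(\mu_t^\Dx,\mu_t)\to0$ for every $T$.

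The only step demanding genuine care is the pointwise relative compactness feeding Arzel\`a--Ascoli: it is exactly the finite-propagation-speed bound~\ref{property:finitepropspeed} that confines all the supports to one compact set uniformly in $\Dx$ and $t\in[0,T]$, and hence makes the family $d_W$-bounded so that Lemma~\ref{lem:compactness} can be applied. Everything else---equicontinuity, consistency, and uniqueness---is delivered directly by the preceding results, so the main effort is in assembling them correctly rather than in any single difficult estimate.
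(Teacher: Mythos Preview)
Your proof is correct and follows essentially the same route as the paper: stability estimates from Proposition~\ref{prop:stabprop1d} feed into Lemma~\ref{lem:compactness} for relative compactness, Ascoli yields a uniformly convergent subsequence, Lemma~\ref{lem:laxwendroff} identifies the limit, and uniqueness from Theorem~\ref{thm:conslaw} upgrades to full convergence. Your diagonal argument over $T$ and the explicit contradiction step are more careful than the paper's presentation, but the underlying strategy is identical.
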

\begin{proof}
By Proposition \ref{prop:stabprop1d} \ref{property:positive}, \ref{property:unitmass} and \ref{property:finitepropspeed}, the measures $\mu^\Dx_t$ stay in $\Prob(\R)$ and have uniformly bounded support for all $\Dx>0$, $t\in[0,T]$, and by \ref{property:timecontwass}, the maps $\mu^\Dx:[0,T]\to\Prob(\R^d)$ are Lipschitz. Moreover, $d_W(\mu_0,\mu^\Dx_t) \leq d_W(\mu_0,\mu^\Dx_0)+d_W(\mu^\Dx_0,\mu^\Dx_t) \leq \Dx + \frac{1}{\lambda_0}t$, so by Lemma \ref{lem:compactness} the set
\[
K := \big\{\mu^\Dx_t\ :\ 0<\Dx<1,\ t\in[0,T] \big\}
\]
is relatively compact with respect to $d_W$-convergence. Hence, by Ascoli's theorem, there exists a subsequence $\Dx_k\to0$ as $k\to\infty$ and some Lipschitz map $\mu:[0,T]\to\Prob(\R^d)$ such that $d_W(\mu^{\Dx_k}_t,\mu_t) \to 0$ as $k\to\infty$, uniformly in $t\in[0,T]$. By Lemma \ref{lem:laxwendroff}, the limit $\mu$ is the measure-valued solution of \eqref{eq:conslaw}. But this solution is unique, which implies that the whole sequence $\mu^\Dx$ converges to $\mu$.
\end{proof}

\subsection{Strong convergence of the method}
If the initial data and the velocity field are sufficiently smooth then we can show that the numerical method in fact converges \emph{strongly}. We assume that the initial data is absolutely continuous with respect to Lebesgue measure, and hence has a density function $\mu_0(x)$. This data is sampled by its cell averages,
\[
\hat{\mu}^0_i := \frac{1}{\Dx}\int_{x_\imhf}^{x_\iphf}\mu_0(x)\,dx
\]
and the numerical solution is realized as the linear-in-time, piecewise constant $L^1$ function
\begin{equation}\label{eq:strongapproxsolndef}
\hat{\mu}_t^\Dx(x) = \frac{t^{n+1}-t}{\Dt}\hat{\mu}^n + \frac{t-t^n}{\Dt}\hat{\mu}^{n+1} \qquad \text{for } t\in[t^n,t^{n+1}),
\end{equation}
where
\[
\hat{\mu}^n(x) = \sum_i \mu_i^n \ind_{[x_\imhf,x_\iphf)}(x).
\]

\begin{proposition}\label{prop:strongstabprop1d}
Consider the unstaggered, one-dimensional Lax--Friedrichs method \eqref{eq:unstagglaxfr}. Assume that $V$ satisfies conditions \ref{cond:linfbound}, \ref{cond:spacelipschitz}, \ref{cond:c2bound}, and that the {CFL} condition \eqref{eq:CFL} is satisfied. Then for all $t\in[0,T]$
\begin{enumerate}[label=(\roman*)]
\item\label{property:positive2} $\hat{\mu}^\Dx_t \geq 0$
\item\label{property:unitmass2} $\|\hat{\mu}^\Dx_t\|_{L^1(\K)} = 1$
\item\label{property:tvb} $\TV(\hat{\mu}^\Dx_t) \leq \TV(\hat{\mu}_0)e^{C_2t} + (e^{C_2t}-1)\frac{C_4}{2C_2}$
\item\label{property:timecontmeas} $\|\hat{\mu}^\Dx_t-\hat{\mu}^\Dx_s\|_{L^1(\K)} \leq C_T|t-s|$ for all $s\geq0$. 
\end{enumerate}
(In \ref{property:timecontmeas}, $C_T = C_2+\lambda_0^{-1}c_T$, where $c_T$ is the upper bound from \ref{property:tvb}, $c_T := \TV(\mu_0)e^{C_2T} + (e^{C_2T}-1)\frac{C_4}{2C_2}$.)
\end{proposition}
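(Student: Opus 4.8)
The plan is to follow the template of Proposition~\ref{prop:stabprop1d}, the genuinely new content being the total-variation bound \ref{property:tvb}, which forces a discrete Grönwall argument. Parts \ref{property:positive2} and \ref{property:unitmass2} are immediate: exactly as in Proposition~\ref{prop:stabprop1d}\ref{property:positive}--\ref{property:unitmass}, the CFL condition \eqref{eq:CFL} together with \ref{cond:linfbound} exhibits $\mu_i^{n+1}$ as a convex combination of $\mu_{i-1}^n$ and $\mu_{i+1}^n$, so $\hat{\mu}^n\geq0$ and $\sum_i\mu_i^{n+1}=\sum_i\mu_i^n$; hence $\|\hat{\mu}^n\|_{L^1}=\Dx\sum_i\mu_i^n$ is conserved and equals $1$, and the linear-in-time interpolation preserves both.

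The engine for \ref{property:tvb} and \ref{property:timecontmeas} is to recast \eqref{eq:unstagglaxfr} in incremental (Harten) form. Writing $D_\iphf^n:=\mu_{i+1}^n-\mu_i^n$, $\delta V_\iphf:=V[\mu^n]_{i+1}-V[\mu^n]_i$, and the interface averages $\bar V_\iphf:=\tfrac12\big(V[\mu^n]_i+V[\mu^n]_{i+1}\big)$, $\bar\mu_\iphf:=\tfrac12(\mu_i^n+\mu_{i+1}^n)$, the discrete product rule $V[\mu^n]_{i+1}\mu_{i+1}^n-V[\mu^n]_i\mu_i^n=\bar V_\iphf D_\iphf^n+\bar\mu_\iphf\delta V_\iphf$ turns \eqref{eq:unstagglaxfr} into
\[
\mu_i^{n+1}-\mu_i^n=\underbrace{\tfrac12\big(1-\tfrac{\Dt}{\Dx}\bar V_\iphf\big)}_{=:C_\iphf}D_\iphf^n-\underbrace{\tfrac12\big(1+\tfrac{\Dt}{\Dx}\bar V_\imhf\big)}_{=:B_\imhf}D_\imhf^n-S_i,
\]
with source $S_i:=\tfrac{\Dt}{2\Dx}\big(\bar\mu_\iphf\delta V_\iphf+\bar\mu_\imhf\delta V_\imhf\big)$. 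By \eqref{eq:CFL} and \ref{cond:linfbound} we have $C_\iphf,B_\imhf\geq0$ and the crucial algebraic identity $C_\iphf+B_\iphf=1$. Subtracting consecutive instances and using this identity to kill the diagonal coefficient $1-C_\iphf-B_\iphf$ gives $D_\iphf^{n+1}=C_\ipthf D_\ipthf^n+B_\imhf D_\imhf^n-(S_{i+1}-S_i)$. Taking absolute values, summing over $i$, and reindexing, the two nonnegative transport sums recombine into $\sum_i(C_\iphf+B_\iphf)|D_\iphf^n|=\TV(\hat{\mu}^n)$, whence $\TV(\hat{\mu}^{n+1})\leq\TV(\hat{\mu}^n)+\sum_i|S_{i+1}-S_i|$.

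Everything now hinges on the source estimate, which I expect to be the main obstacle. A short computation shows the middle terms telescope, giving $S_{i+1}-S_i=\tfrac{\Dt}{2\Dx}\big(\bar\mu_\ipthf\delta V_\ipthf-\bar\mu_\imhf\delta V_\imhf\big)$, and I would split this as $\bar\mu_\ipthf(\delta V_\ipthf-\delta V_\imhf)+(\bar\mu_\ipthf-\bar\mu_\imhf)\delta V_\imhf$. For the second piece, \ref{cond:spacelipschitz} gives $|\delta V_\imhf|\leq C_2\Dx$ while $\bar\mu_\ipthf-\bar\mu_\imhf$ telescopes into the increments $D^n$, so after summing it contributes $\leq C_2\Dt\,\TV(\hat{\mu}^n)$ --- the source of the $e^{C_2t}$ growth. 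The delicate piece is the first one: bounding $|\delta V|$ by $C_2\Dx$ here is \emph{fatal}, since summing against $\bar\mu_\ipthf$ (whose total mass is $\Dx^{-1}$) would leave a term of size $O(1)$ per step. The resolution, and the whole reason assumption \ref{cond:c2bound} is needed, is that $\delta V_\ipthf-\delta V_\imhf$ is a (double) second difference of $V$, hence of size $O(C_4\Dx^2)$ by the Lipschitz-gradient bound; the extra power of $\Dx$ exactly absorbs the factor $\sum_i\bar\mu_\ipthf=\Dx^{-1}$, leaving a term of order $C_4\Dt$. Thus $\TV(\hat{\mu}^{n+1})\leq(1+C_2\Dt)\TV(\hat{\mu}^n)+O(C_4\Dt)$, and the discrete Grönwall inequality (summing the geometric series and using $(1+C_2\Dt)^n\leq e^{C_2t^n}$) yields \ref{property:tvb}, the constant $\tfrac{C_4}{2C_2}$ emerging from the geometric sum; the piecewise-linear interpolation carries the bound to all $t$.

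Finally, \ref{property:timecontmeas} falls out of the same incremental form. From the display, $\sum_i|\mu_i^{n+1}-\mu_i^n|\leq\sum_i\big(C_\iphf|D_\iphf^n|+B_\imhf|D_\imhf^n|\big)+\sum_i|S_i|=\TV(\hat{\mu}^n)+\sum_i|S_i|$, again using $C_\iphf+B_\iphf=1$ after reindexing, while \ref{cond:spacelipschitz} together with $\sum_i\bar\mu_\iphf=\Dx^{-1}$ gives $\sum_i|S_i|\leq\tfrac{\Dt}{\Dx}C_2$. Multiplying by $\Dx$, invoking $\Dx\leq\lambda_0^{-1}\Dt$ (from the lower CFL bound) and $\TV(\hat{\mu}^n)\leq c_T$ from \ref{property:tvb}, yields $\|\hat{\mu}^{n+1}-\hat{\mu}^n\|_{L^1}\leq(\lambda_0^{-1}c_T+C_2)\Dt=C_T\Dt$; since $\hat{\mu}^\Dx$ is piecewise linear in time, the general estimate $\|\hat{\mu}^\Dx_t-\hat{\mu}^\Dx_s\|_{L^1}\leq C_T|t-s|$ follows.
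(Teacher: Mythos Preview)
Your proof is correct and follows essentially the same approach as the paper: parts \ref{property:positive2}--\ref{property:unitmass2} via the convex-combination structure, part \ref{property:tvb} by splitting the TV increment into a transport piece (bounded by $\TV(\hat{\mu}^n)$ thanks to the CFL condition) plus a source from velocity variation controlled by \ref{cond:spacelipschitz} and \ref{cond:c2bound}, followed by a discrete Gr\"onwall argument, and part \ref{property:timecontmeas} by the analogous one-step $L^1$ bound. Your use of the Harten incremental form with the identity $C_\iphf+B_\iphf=1$ is a tidier bookkeeping device than the paper's direct expansion into $A^n+B^n$, but the substance and the key estimates are the same.
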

\begin{proof}
Properties \ref{property:positive2} and \ref{property:unitmass2} obviously follow from Proposition \ref{prop:stabprop1d} (i) and (ii). To show \ref{property:tvb} we split $\TV(\hat{\mu}^{n+1})$ as follows:
\begin{align*}
\TV(\hat{\mu}^{n+1}) &\leq \frac{1}{2}\sum_i \biggl|\big(\mu^{n}_{i+2}-\mu^{n}_{i+1}\big)\left(1-\frac{\Dt}{\Dx}V[\hat{\mu}^n]_{i+2}\right) + \big(\mu^{n}_{i}-\mu^{n}_{i-1}\big)\left(1+\frac{\Dt}{\Dx}V[\hat{\mu}^n]_{i}\right) \\
&\qquad\qquad\quad - \frac{\Dt}{\Dx}\mu^{n}_{i+1}\big(V[\hat{\mu}^n]_{i+2}-V[\hat{\mu}^n]_{i+1}\big) + \frac{\Dt}{\Dx}\mu^{n}_i\big(V[\hat{\mu}^n]_i-V[\hat{\mu}^n]_{i-1}\big)\biggr| \\
&\leq A^{n}+B^{n}
\end{align*}
where
\[
A^{n} = \frac{1}{2}\sum_i \big|\mu^{n}_{i+2}-\mu^{n}_{i+1}\big|\left(1-\frac{\Dt}{\Dx}V[\hat{\mu}^n]_{i+2}\right) + \big|\mu^{n}_{i}-\mu^{n}_{i-1}\big|\left(1+\frac{\Dt}{\Dx}V[\hat{\mu}^n]_{i}\right) \leq \TV(\hat{\mu}^n)
\]
and
\begin{align*}
B^{n} &= \frac{\Dt}{2\Dx}\sum_i \Big|\mu^n_{i+1}\big(V[\hat{\mu}^n]_{i+2}-V[\hat{\mu}^n]_{i+1}\big) - \mu^n_{i-1}\big(V[\hat{\mu}^n]_{i}-V[\hat{\mu}^n]_{i-1}\big)\Big| \\
&\leq \frac{\Dt}{2}\sum_i \Big[ |\mu^n_{i+1}-\mu^n_i| + |\mu^n_{i+1}-\mu^n_i| \Big] \frac{\big|V[\hat{\mu}^n]_{i+2}-V[\hat{\mu}^n]_{i+1}\big|}{\Dx} \\
& \quad \qquad + \frac{\Dt}{2}\sum_i \mu^n_{i-1}\frac{\big|V[\hat{\mu}^n]_{i+2}-V[\hat{\mu}^n]_{i+1}-V[\hat{\mu}^n]_{i}+V[\hat{\mu}^n]_{i-1}\big|}{\Dx^2}\Dx \\
&\leq C_2\Dt\TV(\hat{\mu}^n) + \frac{C_4}{2}\Dt.
\end{align*}
Iterating over all $n$ gives
\[
\TV(\hat{\mu}^{n+1}) \leq (1+C_2\Dt)\TV(\hat{\mu}^n) + \frac{C_4}{2}\Dt \leq \dots \leq (1+C_2\Dt)^{n+1}\left(\TV(\mu_0)+\frac{C_4}{2C_2}\right)-\frac{C_4}{2C_2}
\]
and \ref{property:tvb} follows.

Finally, we show the time continuity \ref{property:timecontmeas}:
\begin{align*}
\|\hat{\mu}^{n+1}-\hat{\mu}^n\|_{L^1(\K)} &= \sum_i|\mu^{n+1}_i-\mu^n_i|\Dx \\
&= \frac{\Dx}{2}\sum_i \bigg|(\mu_{i+1}^n-\mu_i^n)\left(1-\frac{\Dt}{\Dx}V[\hat{\mu}^n]_{i+1}\right) - (\mu_i^n-\mu_{i-1}^n)\left(1+\frac{\Dt}{\Dx}V[\hat{\mu}^n]_{i-1}\right) \\
&\qquad\qquad - \frac{\Dt}{\Dx}\mu_i^n\big(V[\hat{\mu}^n]_{i+1}-V[\hat{\mu}^n]_{i-1}\big)\bigg| \\
&\leq \frac{\Dx}{2}\sum_i |\mu_{i+1}^n-\mu_i^n|\left(1-\frac{\Dt}{\Dx}V[\hat{\mu}^n]_{i+1}\right) + |\mu_i^n-\mu_{i-1}^n|\left(1+\frac{\Dt}{\Dx}V[\hat{\mu}^n]_{i-1}\right) \\
&\qquad\qquad + \frac{\Dt}{\Dx}\mu_i^n\big|V[\hat{\mu}^n]_{i+1}-V[\hat{\mu}^n]_{i-1}\big| \\
&\leq \Dx\TV(\hat{\mu}^n) + C_2\Dt.
\end{align*}
Iterating over $n$ yields \ref{property:timecontmeas}.
\end{proof}

\begin{theorem}\label{thm:convstrongsoln}
Assume that $V$ satisfies conditions \ref{cond:linfbound}, \ref{cond:spacelipschitz}, \ref{cond:lipschitz}, \ref{cond:c2bound} and let $\mu_0\in\Prob(\K)$ be compactly supported and absolutely continuous with respect to Lebesgue measure with density $\mu_0\in\BV(\K)$. Assume that the CFL condition \eqref{eq:CFL} holds. Then the measure-valued solution $\mu$ of \eqref{eq:conslaw} is absolutely continuous, and the one-dimensional Lax--Friedrichs method \eqref{eq:unstagglaxfr} converges strongly to $\mu$. More precisely, if $\hat{\mu}^\Dx$ is given by \eqref{eq:strongapproxsolndef} then
\[
\sup_{t\in[0,T]}\big\|\hat{\mu}^\Dx_t- \mu_t\big\|_{L^1(\K)} \to 0 \qquad \text{as } \Dx \to 0
\]
for any $T>0$.
\end{theorem}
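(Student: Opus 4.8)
The plan is to upgrade the weak convergence already established in Theorem~\ref{thm:convmeasval} to strong $L^1$ convergence, exploiting the extra regularity provided by the uniform $\BV$ bound. Since all the hypotheses of Theorem~\ref{thm:convmeasval} hold, I would first record that the Dirac-based approximations $\mu^\Dx$ from \eqref{eq:approxsolndef} satisfy $\sup_{t\in[0,T]}d_W(\mu^\Dx_t,\mu_t)\to0$, where $\mu$ is the unique measure-valued solution. The piecewise-constant realization $\hat\mu^\Dx$ from \eqref{eq:strongapproxsolndef} carries the same cell masses $\{\mu_i^n\}$, with each atom $\mu_i^n\delta_{x_i}$ replaced by its uniform spreading over the cell $[x_\imhf,x_\iphf)$; hence $d_W(\mu^\Dx_t,\hat\mu^\Dx_t)\leq\Dx$ for every $t$, and therefore $\sup_{t\in[0,T]}d_W(\hat\mu^\Dx_t,\mu_t)\to0$ as well. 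Thus it remains only to promote this to strong convergence and to deduce absolute continuity of the limit.

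The key step would be a compactness argument in $C([0,T];L^1(\K))$. From Proposition~\ref{prop:strongstabprop1d}\ref{property:tvb} the spatial total variations $\TV(\hat\mu^\Dx_t)$ are bounded by $c_T$ uniformly in $\Dx$ and in $t\in[0,T]$, and by finite propagation speed (Proposition~\ref{prop:stabprop1d}\ref{property:finitepropspeed}, which applies verbatim to the cell masses) all the $\hat\mu^\Dx_t$ are supported in one fixed bounded set $B\subset\K$. Restricted to $B$, the compact embedding $\BV(B)\hookrightarrow L^1(B)$ shows that for each fixed $t$ the family $\{\hat\mu^\Dx_t\}_{0<\Dx<1}$ is relatively compact in $L^1(\K)$. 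Combined with the uniform-in-$\Dx$ Lipschitz-in-time estimate $\|\hat\mu^\Dx_t-\hat\mu^\Dx_s\|_{L^1(\K)}\leq C_T|t-s|$ from Proposition~\ref{prop:strongstabprop1d}\ref{property:timecontmeas}, the Arzelà--Ascoli theorem for maps into the Banach space $L^1(\K)$ yields a subsequence $\Dx_k\to0$ and a limit $\nu\in C([0,T];L^1(\K))$ with $\sup_{t\in[0,T]}\|\hat\mu^{\Dx_k}_t-\nu_t\|_{L^1(\K)}\to0$.

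It then remains to identify $\nu$ with $\mu$. Strong $L^1$ convergence on the fixed compact support $B$ implies narrow convergence of the associated measures, hence $d_W(\hat\mu^{\Dx_k}_t,\nu_t)\to0$ for each $t$; comparing with the $d_W$-convergence established above forces $\nu_t=\mu_t$ for every $t\in[0,T]$. In particular $\mu_t=\nu_t\in L^1(\K)$, so the measure-valued solution is absolutely continuous. Finally, since the measure-valued solution is unique (Theorem~\ref{thm:conslaw}), every subsequence of $\{\hat\mu^\Dx\}$ admits a further subsequence converging to the same limit $\mu$, so the whole family converges and $\sup_{t\in[0,T]}\|\hat\mu^\Dx_t-\mu_t\|_{L^1(\K)}\to0$.

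I expect the main obstacle to be the compactness step: one must carefully combine the pointwise-in-time $\BV$-compactness with the temporal equicontinuity to obtain compactness in $C([0,T];L^1(\K))$, and in the unbounded case $\K=\R$ it is essential to first use finite propagation speed to confine all approximants to a common bounded domain, since the embedding $\BV\hookrightarrow L^1$ is compact only on bounded sets. An alternative to the Arzelà--Ascoli route would be to invoke Kolmogorov--Riesz compactness directly on $L^1([0,T]\times\K)$, controlling spatial translations by the $\BV$ bound, temporal translations by the time-continuity estimate, and tightness by the fixed support.
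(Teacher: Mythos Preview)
Your proposal is correct and follows essentially the same route as the paper: $\BV$-compactness (Helly/compact embedding) together with the time-Lipschitz estimate and Ascoli to extract an $L^1$-convergent subsequence, then identification of the limit via the weak convergence of Theorem~\ref{thm:convmeasval} and the bound $d_W(\hat\mu^\Dx_t,\mu^\Dx_t)=O(\Dx)$. Your explicit appeal to finite propagation speed to confine everything to a fixed bounded set is a detail the paper's proof leaves implicit but which is indeed needed when $\K=\R$.
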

\begin{proof}
By Proposition \ref{prop:strongstabprop1d} (ii) and (iii), the set
\[
K := \big\{\hat{\mu}^\Dx_t\ :\ 0<\Dx<1,\ t\in[0,T] \big\}
\]
is uniformly bounded in $\BV(\K)$, so by Helly's theorem, $K$ is relatively compact in $L^1(\K)$. Hence, by Ascoli's theorem there is a subsequence $\Dx_k\to0$ as $k\to\infty$ and some Lipschitz map $\tilde{\mu}:[0,T]\to L^1(\K)$ such that $\hat{\mu}^{\Dx_k}_t\to\tilde{\mu}_t$ in $L^1(\K)$ as $k\to\infty$ uniformly in $t\in[0,T]$. 
Since convergence in $L^1$ implies convergence in $d_W$, also $d_W(\hat{\mu}^{\Dx_k}_t,\tilde{\mu}_t) \to 0$. 
On the other hand, Theorem \ref{thm:convmeasval} implies that $d_W(\mu^{\Dx_k}_t,\mu_t) \to 0$, where $\mu$ is the measure-valued solution of \eqref{eq:conslaw}. Noting that $d_W(\hat{\mu}^{\Dx_k}_t,\mu^{\Dx_k}_t) \leq \frac{\Dx}{2}$, we can conclude $\mu=\tilde{\mu}$. The conclusion follows.
\end{proof}

\subsection{Multiple dimensions}\label{sec:multid}
The extension to multiple space dimensions is done in a tensorial fashion; we limit ourselves to the two-dimensional variant for the sake of notational simplicity. 
As before, let $\mathbb{K}_{1}, \mathbb{K}_{2}$ be either the torus $\mathbb{T}$ or the real line $\mathbb{R}$. Let $(x_\iphf)_i$ and $(y_\jphf)_j$ be discretizations 
of $\K_1$ and $\K_2$ with mesh lengths $x_\iphf-x_\imhf\equiv\Dx$ and $y_\jphf-y_\jmhf\equiv\Dy$, respectively. With obvious notation we get the unstaggered, two-dimensional
Lax--Friedrichs method:
\begin{equation}\label{eq:unstagglaxfr2d}
\begin{split}
\mu_{i,j}^0 &= \ip{\mu_0}{\phi_{i,j}} \\
\mu_{i,j}^{n+1} &= \frac{\mu_{i-1,j}^n+\mu_{i+1,j}^n+\mu_{i,j-1}^n+\mu_{i,j+1}^n}{4} - \Dt\frac{V^1[\mu^n]_{i+1,j}\mu_{i+1,j}^n - V^1[\mu^n]_{i-1,j}\mu_{i-1,j}^n}{2\Dx} \\
&\quad  - \Dt\frac{V^2[\mu^n]_{i,j+1}\mu_{i,j+1}^n - V^2[\mu^n]_{i,j-1}\mu_{i,j-1}^n}{2\Dy}
\end{split}
\end{equation}
where we denote $\mu^n = \sum_i\sum_j \mu_{i,j}^n\delta_{(x_i,y_j)}$. As before, $\mu$ is interpolated linearly between time steps,
\begin{equation}\label{eq:approxsolndef2d}
\mu_t^{\Dx, \Dy} := \frac{t^{n+1}-t}{\Dt}\mu^n + \frac{t-t^n}{\Dt}\mu^{n+1} \qquad \text{for } t\in[t^n,t^{n+1}).
\end{equation}
We omit the proofs of the following stability and convergence results, as they are straightforward generalizations of their one-dimensional counterparts.

\begin{proposition}\label{prop:stabprop2d}
Consider the unstaggered, two-dimensional Lax--Friedrichs method \eqref{eq:unstagglaxfr2d}. Assume that $V$ satisfies condition \ref{cond:linfbound}, and that the {CFL} condition
\begin{equation}\label{eq:CFL2d}
\lambda_0\leq\frac{\Dt}{\min(\Dx,\Dy)}C_1 \leq \frac{1}{2}
\end{equation}
is satisfied for some $\lambda_0>0$. Then for all $t\geq0$
\begin{enumerate}[label=(\roman*)]
\item\label{property:positive2d} $\mu^{\Dx, \Dy}_t \geq 0$
\item\label{property:unitmass2d} $\|\mu^{\Dx, \Dy}_t\|_{\Meas(\xset)} = 1$
\item\label{property:timecontwass2d} $d_W\big(\mu^{\Dx, \Dy}_t,\mu^{\Dx, \Dy}_s\big) \leq \frac{\Dx+\Dy}{2\Dt}|t-s|$ for all $s\geq0$.
\end{enumerate}
\end{proposition}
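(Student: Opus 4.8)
The plan is to mirror the one-dimensional argument of Proposition~\ref{prop:stabprop1d}, exploiting the fact that the two-dimensional update \eqref{eq:unstagglaxfr2d} decouples into an $x$-directional and a $y$-directional contribution, each of exactly the one-dimensional form.

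For \ref{property:positive2d} and \ref{property:unitmass2d}, I would first rewrite \eqref{eq:unstagglaxfr2d} as a linear combination of the four neighbouring values,
\[
\mu_{i,j}^{n+1} = \tfrac14\mu_{i-1,j}^n\bigl(1+\tfrac{2\Dt}{\Dx}V^1[\mu^n]_{i-1,j}\bigr) + \tfrac14\mu_{i+1,j}^n\bigl(1-\tfrac{2\Dt}{\Dx}V^1[\mu^n]_{i+1,j}\bigr) + \tfrac14\mu_{i,j-1}^n\bigl(1+\tfrac{2\Dt}{\Dy}V^2[\mu^n]_{i,j-1}\bigr) + \tfrac14\mu_{i,j+1}^n\bigl(1-\tfrac{2\Dt}{\Dy}V^2[\mu^n]_{i,j+1}\bigr).
\]
Since $|V^k[\nu]|\leq\|V[\nu]\|_{L^\infty}\leq C_1$ by \ref{cond:linfbound}, the CFL condition \eqref{eq:CFL2d} gives $\tfrac{2\Dt}{\Dx}|V^1|\leq\tfrac{2\Dt}{\min(\Dx,\Dy)}C_1\leq1$ and likewise for the $V^2$ coefficients, so all four coefficients are nonnegative; positivity \ref{property:positive2d} then follows by induction, the base case being $\mu_{i,j}^0=\ip{\mu_0}{\phi_{i,j}}\geq0$. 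For \ref{property:unitmass2d} I would sum the update over all $(i,j)$: after reindexing, the averaging term returns $\sum_{i,j}\mu_{i,j}^n$ and the two flux differences telescope to zero (by periodicity on $\T$, by compact support on $\R$), so the total mass is conserved; the base case is $\sum_{i,j}\mu_{i,j}^0=\ip{\mu_0}{\sum_{i,j}\phi_{i,j}}=\ip{\mu_0}{1}=1$ since the hat functions form a partition of unity. Both properties pass to $\mu_t^{\Dx,\Dy}$ because \eqref{eq:approxsolndef2d} is a convex combination in time.

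For the Wasserstein time-continuity \ref{property:timecontwass2d}, I would reduce as in Proposition~\ref{prop:stabprop1d} to the single step $s=t^n$, $t=t^{n+1}$ and test against Lipschitz $\psi$ with $\|\psi\|_{\Lip}\leq1$, writing $\psi_{i,j}=\psi(x_i,y_j)$. The discrete averaging $\tfrac14(\mu_{i-1,j}+\mu_{i+1,j}+\mu_{i,j-1}+\mu_{i,j+1})-\mu_{i,j}$ splits into a second difference in $i$ and a second difference in $j$ (writing $-\mu_{i,j}=-\tfrac12\mu_{i,j}-\tfrac12\mu_{i,j}$), so that $\ip{\mu^{n+1}-\mu^n}{\psi}$ separates into an $x$-part and a $y$-part, each identical in structure to the one-dimensional expression but with the averaging weight $\tfrac14$ in place of $\tfrac12$. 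Performing summation by parts in each direction and combining the diffusion and advection terms exactly as in the one-dimensional proof, the $x$-part becomes
\[
\tfrac14\sum_{i,j}(\psi_{i+1,j}-\psi_{i,j})\Bigl(\mu_{i,j}^n\bigl(1+\tfrac{2\Dt}{\Dx}V^1[\mu^n]_{i,j}\bigr) + \mu_{i+1,j}^n\bigl(\tfrac{2\Dt}{\Dx}V^1[\mu^n]_{i+1,j}-1\bigr)\Bigr),
\]
with the analogous $y$-part. Bounding $|\psi_{i+1,j}-\psi_{i,j}|\leq\Dx\|\psi\|_{\Lip}$, using the nonnegativity of the coefficients $1\pm\tfrac{2\Dt}{\Dx}V^1$ (again from \eqref{eq:CFL2d}) and the unit mass \ref{property:unitmass2d}, the $x$-part is bounded by $\tfrac{\Dx}{2}\|\psi\|_{\Lip}$ and the $y$-part by $\tfrac{\Dy}{2}\|\psi\|_{\Lip}$; taking the supremum over $\psi$ yields $d_W(\mu^{n+1},\mu^n)\leq\tfrac{\Dx+\Dy}{2}=\tfrac{\Dx+\Dy}{2\Dt}|t^{n+1}-t^n|$, and the general case follows by the triangle inequality and the linear interpolation \eqref{eq:approxsolndef2d}.

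I expect no genuine obstacle here, since the argument is a direct transcription of the one-dimensional case. The one point that requires care is the bookkeeping of the averaging weight $\tfrac14$ (four neighbours in two dimensions) rather than $\tfrac12$: this is precisely why the advection terms carry the factors $\tfrac{2\Dt}{\Dx}$ and $\tfrac{2\Dt}{\Dy}$, and why the CFL condition \eqref{eq:CFL2d} is stated with $\min(\Dx,\Dy)$ and the constant $\tfrac12$ on the right, exactly so that $\tfrac{2\Dt}{\Dx}C_1\leq1$ and $\tfrac{2\Dt}{\Dy}C_1\leq1$ hold simultaneously and the directional coefficients remain nonnegative.
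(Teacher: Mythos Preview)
Your proposal is correct and is precisely the ``straightforward generalization of the one-dimensional counterpart'' that the paper invokes in lieu of a written proof. The rewriting of the update as a four-term combination with coefficients $\tfrac14(1\pm\tfrac{2\Dt}{\Dx}V^1)$ and $\tfrac14(1\pm\tfrac{2\Dt}{\Dy}V^2)$, the directional splitting of the second difference via $-\mu_{i,j}=-\tfrac12\mu_{i,j}-\tfrac12\mu_{i,j}$, and the resulting bounds $\tfrac{\Dx}{2}$ and $\tfrac{\Dy}{2}$ on the two parts are exactly what the omitted proof would contain; your bookkeeping of the $\tfrac14$ weights and the factors $\tfrac{2\Dt}{\Dx}$, $\tfrac{2\Dt}{\Dy}$ is accurate and explains correctly why the CFL condition \eqref{eq:CFL2d} carries the constant $\tfrac12$.
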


\begin{theorem}\label{thm:convmeasval2d}
Consider the two-dimensional equation \eqref{eq:conslaw}. Assume that $V$ satisfies conditions \ref{cond:linfbound}, \ref{cond:spacelipschitz}, \ref{cond:lipschitz} and let $\mu_0\in\Prob(\K_1\times\K_2)$ have compact support. Assume that the CFL condition \eqref{eq:CFL2d} is satisfied. Then the Lax--Friedrichs method \eqref{eq:unstagglaxfr2d} converges weakly to the measure-valued solution of \eqref{eq:conslaw}. More precisely, if $\mu^{\Dx, \Dy}$ if given by \eqref{eq:approxsolndef2d} then
\[
\sup_{t\in[0,T]}d_W\big(\mu^{\Dx, \Dy}_t,\mu_t\big) \to 0 \qquad \text{as } \Dx,\Dy \to 0
\]
for any $T>0$.
\end{theorem}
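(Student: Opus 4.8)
The plan is to mirror the one-dimensional argument (Theorem \ref{thm:convmeasval}) essentially verbatim, replacing its three ingredients --- stability, compactness, and consistency --- by their two-dimensional analogues. First I would invoke Proposition \ref{prop:stabprop2d} \ref{property:positive2d} and \ref{property:unitmass2d} to guarantee that every $\mu^{\Dx,\Dy}_t$ lies in $\Prob(\K_1\times\K_2)$. The time-continuity estimate \ref{property:timecontwass2d} gives $d_W(\mu^{\Dx,\Dy}_t,\mu^{\Dx,\Dy}_s) \leq \frac{\Dx+\Dy}{2\Dt}|t-s|$, and under the CFL condition \eqref{eq:CFL2d} --- which forces $\Dt \geq \lambda_0 C_1^{-1}\min(\Dx,\Dy)$ --- the constant $\frac{\Dx+\Dy}{2\Dt}$ is bounded uniformly in the mesh, provided $\Dx$ and $\Dy$ are refined at comparable rates so that $\Dx/\Dy$ stays bounded. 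Thus the maps $t\mapsto\mu^{\Dx,\Dy}_t$ are uniformly Lipschitz into $(\Prob_1,d_W)$.

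Next I would establish $d_W$-boundedness of the family. Since $\mu_0$ is compactly supported it lies in $\Prob_1$, and $d_W(\mu^{\Dx,\Dy}_t,\mu_0) \leq d_W(\mu^{\Dx,\Dy}_t,\mu^{\Dx,\Dy}_0) + d_W(\mu^{\Dx,\Dy}_0,\mu_0)$, where the first term is controlled by the uniform time-Lipschitz constant times $T$ and the second tends to $0$ (and is bounded for $\Dx,\Dy<1$). Hence the set $K := \{\mu^{\Dx,\Dy}_t : 0<\Dx,\Dy<1,\ t\in[0,T]\}$ is bounded in $(\Prob_1,d_W)$, so by Lemma \ref{lem:compactness} it is relatively compact. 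Ascoli's theorem then produces a sequence of meshes with $\Dx_k,\Dy_k\to0$ and a Lipschitz limit $\mu:[0,T]\to\Prob_1$ with $\sup_{t\in[0,T]}d_W(\mu^{\Dx_k,\Dy_k}_t,\mu_t)\to0$.

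The remaining --- and genuinely new --- step is the consistency (Lax--Wendroff) argument: I would prove the two-dimensional analogue of Lemma \ref{lem:laxwendroff}, that any such limit is the measure-valued solution. This follows the one-dimensional template: multiply the scheme \eqref{eq:unstagglaxfr2d} by $\phi(x_i,y_j,t^n)$ for $\phi\in C_c^\infty((\K_1\times\K_2)\times\R)$, sum over $i,j,n$, and perform summation by parts separately in the $x$- and $y$-directions. The four-point averaging term yields a combination of the form $\frac{\frac14(\phi(x-\Dx,\cdot)+\phi(x+\Dx,\cdot)+\phi(\cdot,y-\Dy)+\phi(\cdot,y+\Dy))-\phi^{n-1}}{\Dt}$, whose averaging defect is $O(\max(\Dx,\Dy)^2/\Dt)=o(1)$, so it reconstructs $\partial_t\phi$ in the limit; the two flux differences converge to $V^1[\mu_t]\partial_x\phi + V^2[\mu_t]\partial_y\phi$. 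The passage to the limit uses condition \ref{cond:lipschitz} (giving $V[\mu^{\Dx_k,\Dy_k}_t]\to V[\mu_t]$ uniformly via $d_W$-convergence) and condition \ref{cond:spacelipschitz} (giving uniform Lipschitz bounds on the discrete velocity fields). Finally, uniqueness of the measure-valued solution (Theorem \ref{thm:conslaw}, valid for $\xset\subset\R^d$) upgrades subsequential convergence to convergence of the whole family, yielding $\sup_{t\in[0,T]}d_W(\mu^{\Dx,\Dy}_t,\mu_t)\to0$.

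The main obstacle is this two-dimensional consistency step, as it is the only part not reducible to a formal restatement of the one-dimensional result: one must carry out the tensorial summation by parts carefully and verify that the cross-direction averaging in \eqref{eq:unstagglaxfr2d} still reconstructs $\partial_t\phi$ and generates no spurious diffusive remainder in the limit. A secondary point requiring care is the uniform-in-mesh time-Lipschitz bound, since the constant $\frac{\Dx+\Dy}{2\Dt}$ is controlled by \eqref{eq:CFL2d} only when $\Dx$ and $\Dy$ are comparable.
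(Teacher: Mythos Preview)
Your proposal is correct and follows exactly the approach the paper intends: the paper explicitly omits the proof of Theorem~\ref{thm:convmeasval2d}, stating that it is a ``straightforward generalization'' of the one-dimensional argument, and your outline---stability via Proposition~\ref{prop:stabprop2d}, compactness via Lemma~\ref{lem:compactness} and Ascoli, a two-dimensional Lax--Wendroff consistency argument, and uniqueness from Theorem~\ref{thm:conslaw}---is precisely that generalization. Your observation that the constant $\tfrac{\Dx+\Dy}{2\Dt}$ is uniformly bounded under \eqref{eq:CFL2d} only when $\Dx/\Dy$ stays bounded is a genuine subtlety that the paper does not address.
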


As in the $1$-dimensional case, sufficient smoothness for the initial data and the velocity field will yield \textit{strong} convergence of the numerical method. Assume the initial datum is absolutely continuous with respect to the Lebesgue measure with density function $\mu_{0}(x, y)$. This initial data is sampled by its cell averages: 
\[
 \hat{\mu}^0_{i,j} := \frac{1}{\Dx \Dy}\int_{x_\imhf}^{x_\iphf} \int_{y_\jmhf}^{y_\jphf}\mu_0(x,y)\,dx dy
\]
Similarly we define the numerical solution as piecewise linear $L^1$ function as
\begin{equation}\label{eq:strongapproxsolndef2d}
\hat{\mu}_t^{\Dx, \Dy} = \frac{t^{n+1}-t}{\Dt}\hat{\mu}^n + \frac{t-t^n}{\Dt}\hat{\mu}^{n+1} \qquad \text{for } t\in[t^n,t^{n+1})
\end{equation}
where
\[
\hat{\mu}^n(x, y) = \sum_{i, j} \hat{\mu}_{i, j}^n \ind_{[x_\imhf,x_\iphf) \times [y_\jmhf,y_\jphf)}(x, y).
\]

\begin{proposition}\label{prop:strongstabprop2d}
Consider the unstaggered, one-dimensional Lax–Friedrichs method \eqref{eq:unstagglaxfr}. Assume that $V$ satisfies the conditions \ref{cond:linfbound}, \ref{cond:spacelipschitz}, \ref{cond:c2bound} and that the CFL condition \eqref{eq:CFL2d} is satisfied. Then for all $t\in[0,T]$
 
\begin{enumerate}[label=(\roman*)]
\item\label{property:positive2dd} $\hat{\mu}^{\Dx, \Dy}_t \geq 0$
\item\label{property:unitmass2dd} $\|\hat{\mu}^{\Dx, \Dy}_t\|_{L^1(\K_{1} \times \K_{2})} = 1$
\item\label{property:tvb2dd} $\TV(\hat{\mu}^{\Dx, \Dy}_t) \leq c_T:= \TV(\hat{\mu}_0)e^{C_2T} + (e^{C_2T}-1)\frac{C_4}{2C_2}$
\item\label{property:timecontmeas2dd} $\|\hat{\mu}^{\Dx, \Dy}_t-\hat{\mu}^{\Dx, \Dy}_s\|_{L^1(\K_{1} \times \K_{2})} \leq C_T|t-s|$ for all $s\geq0$. 
\end{enumerate}
(In \ref{property:timecontmeas2dd}, $C_T = C_2+\lambda_0^{-1}c_T$, where $c_T$ is the upper bound from \ref{property:tvb2dd}.)
\end{proposition}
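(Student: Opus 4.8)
The plan is to follow the one-dimensional argument of Proposition~\ref{prop:strongstabprop1d} verbatim wherever possible, the only genuinely new feature being the coupling between the two coordinate directions in the total-variation estimate. Properties~\ref{property:positive2dd} and \ref{property:unitmass2dd} require no work: since the cell averages $\hat\mu^n_{i,j}$ evolve by exactly the same update \eqref{eq:unstagglaxfr2d} as the atomic weights, positivity and conservation of mass are inherited directly from Proposition~\ref{prop:stabprop2d}\ref{property:positive2d},\ref{property:unitmass2d}.

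The structural observation that drives the rest of the proof is that, under the halved CFL condition \eqref{eq:CFL2d}, the two-dimensional update splits as a plain average $\hat\mu^{n+1}=\tfrac12(\nu^A+\nu^B)$ of two one-dimensional Lax--Friedrichs sweeps, where $\nu^A_{i,j}=\tfrac{\hat\mu^n_{i-1,j}+\hat\mu^n_{i+1,j}}2-\tfrac{\Dt}{\Dx}\bigl(V^1[\hat\mu^n]_{i+1,j}\hat\mu^n_{i+1,j}-V^1[\hat\mu^n]_{i-1,j}\hat\mu^n_{i-1,j}\bigr)$ acts only along rows and $\nu^B$ is its column-wise analogue in the $y$-direction. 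Condition \eqref{eq:CFL2d} is precisely what guarantees that each of $\nu^A,\nu^B$ is a genuine convex-combination (monotone) operator, i.e.\ that all four coefficients $\tfrac12\pm\tfrac{\Dt}{\Dx}V^1$, $\tfrac12\pm\tfrac{\Dt}{\Dy}V^2$ are nonnegative; this is why the Courant number is halved relative to \eqref{eq:CFL}. Since total variation is a seminorm, $\TV(\hat\mu^{n+1})\le\tfrac12\TV(\nu^A)+\tfrac12\TV(\nu^B)$, and it suffices to bound $\TV(\nu^A)$, the estimate for $\nu^B$ being symmetric.

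For $\TV(\nu^A)$ I would control the variation in each direction separately. In the $x$-direction $\nu^A$ is nothing but the one-dimensional scheme applied row by row, so the $x$-variation is handled exactly as in the proof of Proposition~\ref{prop:strongstabprop1d}: the monotone transport part is bounded by $\TV_x(\hat\mu^n)$, while the variation of $V^1$ contributes $C_2\Dt\,\TV_x(\hat\mu^n)$ via \ref{cond:spacelipschitz} and a mixed-difference term bounded by $\tfrac{C_4}2\Dt$ via \ref{cond:c2bound} and the unit mass \ref{property:unitmass2dd}. The transverse ($y$-)variation of $\nu^A$ is the one step with no one-dimensional counterpart, and I expect it to be the main obstacle. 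Here I would write $\nu^A_{i,j}=(L_j\hat\mu^n_{\cdot,j})_i$, where $L_j$ is the one-dimensional operator carrying the velocities $V^1[\hat\mu^n]_{\cdot,j}$, and split a $y$-difference as $\nu^A_{i,j+1}-\nu^A_{i,j}=\bigl(L_{j+1}(\hat\mu^n_{\cdot,j+1}-\hat\mu^n_{\cdot,j})\bigr)_i+\bigl((L_{j+1}-L_j)\hat\mu^n_{\cdot,j}\bigr)_i$. The crucial point is that $L_{j+1}$ is $\ell^1$-nonexpansive (its coefficients are nonnegative and, summed over the stencil, equal to one), so the first term contributes at most $\TV_y(\hat\mu^n)$; the second term measures how fast the operator changes with $j$, and after a discrete summation by parts it is controlled by $C_2\Dt\,\TV_x(\hat\mu^n)$ (first differences of $V^1$ in $y$, from \ref{cond:spacelipschitz}) together with $C_4\Dt$ (mixed second differences of $V^1$, from \ref{cond:c2bound}) and the unit mass. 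A naive bound that lumps the whole flux together would produce a spurious term of order $\TV_y(\hat\mu^n)$ rather than $\Dt\,\TV_y(\hat\mu^n)$, so exploiting the $\ell^1$-contraction of $L_{j+1}$ and the telescoping cancellation is essential. Collecting the two directions and the symmetric estimate for $\nu^B$ yields a single-step recursion of the form $\TV(\hat\mu^{n+1})\le(1+C_2\Dt)\TV(\hat\mu^n)+\tfrac{C_4}2\Dt$, and solving this discrete Gr\"onwall inequality exactly as in the one-dimensional case gives the bound $c_T$ in \ref{property:tvb2dd}.

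Finally, \ref{property:timecontmeas2dd} follows by mirroring the one-dimensional time-continuity estimate on the same decomposition. Each sweep differs from the identity by a numerical-viscosity term (the discrete second difference coming from the averaging), whose $L^1$ norm is of order $\Dx\,\TV_x(\hat\mu^n)$ respectively $\Dy\,\TV_y(\hat\mu^n)$, plus a transport term of order $C_2\Dt$ via \ref{cond:spacelipschitz}. Using \eqref{eq:CFL2d} to bound the mesh widths by a multiple of $\Dt$ gives $\|\hat\mu^{n+1}-\hat\mu^n\|_{L^1}\le\lambda_0^{-1}\,\TV(\hat\mu^n)\,\Dt+C_2\Dt$; inserting the uniform bound $c_T$ from \ref{property:tvb2dd} and summing the telescoping estimate over the time steps between $s$ and $t$ produces the Lipschitz constant $C_T=C_2+\lambda_0^{-1}c_T$.
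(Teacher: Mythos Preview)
The paper does not actually prove this proposition: immediately before Proposition~\ref{prop:stabprop2d} it announces that the proofs of the two-dimensional stability and convergence results are omitted ``as they are straightforward generalizations of their one-dimensional counterparts.'' Your proposal is therefore more detailed than what the paper provides, and it is a correct and well-organized instantiation of that generalization.

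Your key structural device---writing $\hat\mu^{n+1}=\tfrac12(\nu^A+\nu^B)$ as the average of two one-dimensional Lax--Friedrichs sweeps, each monotone under the halved Courant number in \eqref{eq:CFL2d}---is exactly the right way to reduce the multi-dimensional estimates to the computations in Proposition~\ref{prop:strongstabprop1d}. You also correctly isolate the one genuinely new step, namely the transverse ($y$-)variation of the $x$-sweep $\nu^A$, and your treatment via the splitting $L_{j+1}(\hat\mu^n_{\cdot,j+1}-\hat\mu^n_{\cdot,j})+(L_{j+1}-L_j)\hat\mu^n_{\cdot,j}$ together with the $\ell^1$-contractivity of $L_{j+1}$ (its nonnegative column sums equal one after reindexing) is sound. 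The time-continuity argument for \ref{property:timecontmeas2dd} is likewise a faithful two-dimensional transcription of the last part of the proof of Proposition~\ref{prop:strongstabprop1d}.

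One minor caveat: the constants that fall out of the cross-term $(L_{j+1}-L_j)\hat\mu^n_{\cdot,j}$ may carry dimension-dependent factors relative to the one-dimensional recursion, so $c_T$ and $C_T$ in the statement should be read as having the same \emph{form} as in one dimension rather than being literally identical. Since the paper supplies no proof here, it is silent on this point as well; your argument delivers the qualitative conclusion the paper needs.
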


So now we can finally state:

\begin{theorem}\label{thm:convstrongsoln2d}
Assume that $V$ satisfies conditions \ref{cond:linfbound}, \ref{cond:spacelipschitz}, \ref{cond:lipschitz}, \ref{cond:c2bound} and let $\mu_0\in\Prob(\K_1 \times \K_2)$ be compactly supported and absolutely continuous with respect to Lebesgue measure with density $\mu_0\in\BV(\K_1 \times \K_2)$. Assume that the CFL condition \eqref{eq:CFL2d} holds. Then the measure-valued solution $\mu$ of \eqref{eq:conslaw} takes values in $L^1(\K_1\times\K_2)$, and the two-dimensional Lax--Friedrichs method \eqref{eq:unstagglaxfr2d} converges strongly to $\mu$. More precisely, if $\hat{\mu}^{\Dx, \Dy}$ if given by \eqref{eq:strongapproxsolndef2d} then
\[
\sup_{t\in[0,T]}\big\|\hat{\mu}^{\Dx,\Dy}_t-\mu_t\big\|_{L^1(\K_{1} \times \K_{2})} \to 0 \qquad \text{as } \Dx, \Dy \to 0
\]
for any $T>0$.
\end{theorem}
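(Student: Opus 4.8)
The plan is to mirror the one-dimensional argument of Theorem \ref{thm:convstrongsoln} verbatim, substituting each ingredient by its two-dimensional counterpart. First I would invoke Proposition \ref{prop:strongstabprop2d} \ref{property:unitmass2dd} and \ref{property:tvb2dd} to deduce that the family
\[
K := \big\{\hat{\mu}^{\Dx,\Dy}_t \ :\ 0<\Dx,\Dy<1,\ t\in[0,T]\big\}
\]
is uniformly bounded in $\BV(\K_1\times\K_2)$, with uniformly controlled $L^1$-norm and total variation. To upgrade this BV bound to genuine relative compactness in $L^1(\K_1\times\K_2)$---rather than merely $L^1_{\loc}$---when one or both $\K_i$ equal $\R$, I would separately record that the scheme \eqref{eq:unstagglaxfr2d} has finite propagation speed: if all four neighbours of a cell vanish at time $t^n$, then so does that cell at $t^{n+1}$. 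Combined with the compact support of $\mu_0$, this yields a uniform-in-$(\Dx,\Dy,t)$ bound on $\supp\hat{\mu}^{\Dx,\Dy}_t$ over $[0,T]$, exactly as in Proposition \ref{prop:stabprop1d}\ref{property:finitepropspeed}. Helly's selection theorem then gives relative compactness of $K$ in $L^1(\K_1\times\K_2)$.

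Next I would pass from spatial to space--time compactness. Proposition \ref{prop:strongstabprop2d}\ref{property:timecontmeas2dd} supplies the uniform Lipschitz-in-time estimate $\|\hat{\mu}^{\Dx,\Dy}_t-\hat{\mu}^{\Dx,\Dy}_s\|_{L^1}\le C_T|t-s|$, so the maps $t\mapsto\hat{\mu}^{\Dx,\Dy}_t$ are equicontinuous as functions into $L^1(\K_1\times\K_2)$. By the Arzel\`a--Ascoli theorem I can then extract a sequence $(\Dx_k,\Dy_k)\to0$ and a Lipschitz map $\tilde{\mu}:[0,T]\to L^1(\K_1\times\K_2)$ with $\hat{\mu}^{\Dx_k,\Dy_k}_t\to\tilde{\mu}_t$ in $L^1$, uniformly on $[0,T]$.

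It then remains to identify $\tilde{\mu}$ with the measure-valued solution $\mu$. Since $L^1$ convergence is stronger than $d_W$ convergence, also $d_W(\hat{\mu}^{\Dx_k,\Dy_k}_t,\tilde{\mu}_t)\to0$. On the other hand, Theorem \ref{thm:convmeasval2d} gives $d_W(\mu^{\Dx_k,\Dy_k}_t,\mu_t)\to0$, where $\mu^{\Dx,\Dy}$ is the atomic approximation \eqref{eq:approxsolndef2d} and $\mu$ is the unique measure-valued solution. The two approximations $\hat{\mu}^{\Dx,\Dy}_t$ and $\mu^{\Dx,\Dy}_t$ carry the same mass in each rectangular cell; transporting the piecewise-constant density onto the cell-centred atoms moves mass by at most half a cell diagonal, so $d_W(\hat{\mu}^{\Dx,\Dy}_t,\mu^{\Dx,\Dy}_t)\le \tfrac{1}{2}\sqrt{\Dx^2+\Dy^2}$. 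Letting $k\to\infty$ and using uniqueness of limits in $d_W$ forces $\tilde{\mu}=\mu$. In particular $\mu$ takes values in $L^1(\K_1\times\K_2)$, being the uniform $L^1$-limit of the $\hat{\mu}^{\Dx_k,\Dy_k}$, and uniqueness of the measure-valued solution promotes the subsequential convergence to convergence of the whole family, yielding the claimed uniform $L^1$ convergence.

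The main obstacle I anticipate is the $L^1$ (as opposed to $L^1_{\loc}$) compactness on the unbounded factors $\K_i=\R$: Helly's theorem only controls $\hat{\mu}$ on bounded sets, and one must rule out mass escaping to infinity. This is exactly what the finite-propagation-speed observation provides, and it is the one step that does not transcribe directly from the properties listed in Proposition \ref{prop:strongstabprop2d}; every other piece is a routine two-dimensional restatement of the one-dimensional proof.
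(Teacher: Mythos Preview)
Your proposal is correct and follows essentially the same approach as the paper, which in fact omits the proof entirely with the remark that it is a straightforward generalization of the one-dimensional Theorem \ref{thm:convstrongsoln}. Your added observation about finite propagation speed to secure $L^1$ (rather than $L^1_{\loc}$) compactness on unbounded factors is a valid refinement that the paper's one-dimensional proof also leaves implicit.
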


\section{Numerical experiments}
In this section we illustrate our analytical results by performing several numerical experiments for the Kuramoto equation with identical \eqref{eq:ksident} and non-identical \eqref{eq:ks} natural frequencies. Recall that in these equations, $\theta$ takes values in the periodic domain $\T$, while $\Omega$ lies in $\R$.

\subsection{One-dimensional simulations}
We consider first the one-dimensional Kuramoto equation with identical oscillators \eqref{eq:ksident}. In all experiments we have used the unstaggered Lax--Friedrichs method \eqref{eq:unstagglaxfr} with CFL number 0.4 and with $T=0.5$ as the final time. All simulations were computed on a sequence of meshes with $N=32,64,128,256,512$ grid points, as well as a reference solution with $N=4096$ points. The tables in each subsection shows the number of grid points as well as the error and the experimental order of convergence (EOC), computed both in the 1-Wasserstein distance $d_W$ and in $L^1$.

The experiments in this section solve the Kuramoto equation with progressively more singular initial data. As will be seen from the tables, the EOC in the 1-Wasserstein distance is close to 1 in all cases, while in $L^1$ it is close to 1 only for smooth data.

\subsubsection{Polynomial initial data}\label{sec:numexp1}
The initial data is taken to be the continuous and piecewise parabolic function
\[
 \mu_{0}(\theta) = \begin{cases}
      \frac{6}{\pi^{3}}(\frac{3\pi}{2} - \theta) (\theta - \frac{\pi}{2})  & \text{if } \theta\in[\frac{\pi}{2}, \frac{3\pi}{2}) \\
      0 & \text{otherwise.}
 \end{cases}
\]
As shown in Figure \ref{fig:piecewiseparabolic}, the numerical solution is a non-oscillatory and reasonable approximation at all mesh resolutions. Table \ref{tab:piecewiseparabolic} shows that the numerical method seems to converge at a rate close to 1, both in the 1-Wasserstein and the $L^1$ distances.

\begin{figure}[h]
    \centering
    \includegraphics[width=8cm]{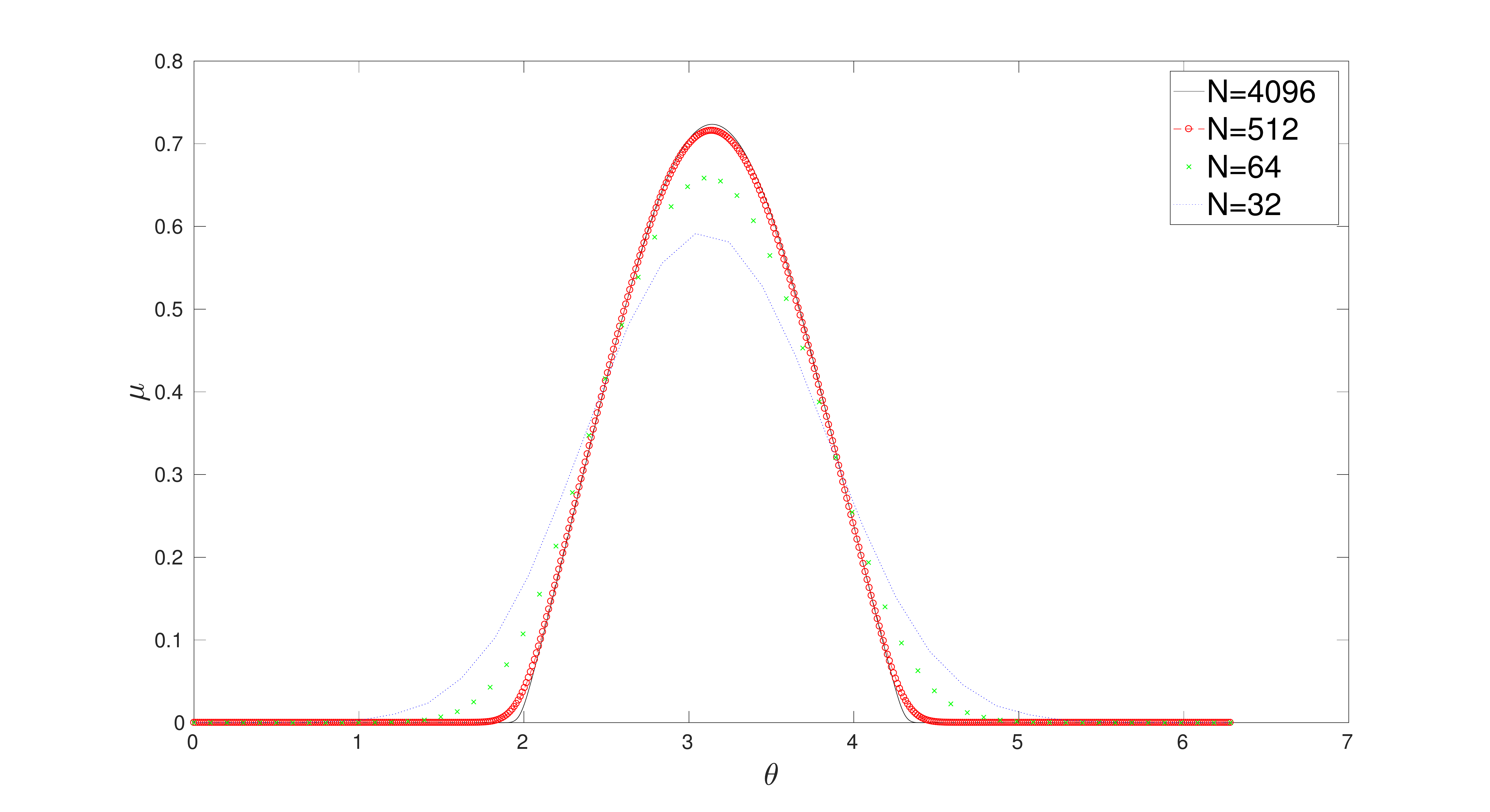}
    \caption{Comparing solutions on different meshes at $t=0.5$ for smooth initial data.} 
    \label{fig:piecewiseparabolic}
\end{figure}
\begin{table}[h]
\centering
\begin{tabular}{c|cc|cc}
& \multicolumn{2}{c|}{1-Wasserstein} & \multicolumn{2}{c}{$L^1$} \\
$N$   & Error   & EOC & Error   & EOC \\ \hline
32  & 0.1351 &  & 0.2811                                            &                                                \\
64  & 0.0634 & 1.09                  & 0.1336                                            & 1.07                                           \\
128 & 0.0303 & 1.07                  & 0.0663                                            & 1.01                                           \\
256 & 0.0153 & 0.99                  & 0.0336                                            & 0.98                                           \\
512 & 0.0073 & 1.07                  & 0.0157                                            & 1.10                                           \\
\end{tabular}
\caption{Errors and EOC in the $1$-Wasserstein and $L^1$ distances at $t=0.5$ for smooth initial data.}
\label{tab:piecewiseparabolic}
\end{table}

\subsubsection{Piecewise constant initial data}\label{sec:numexp2}
This experiment was taken from \cite{Amadori} and uses piecewise constant initial data,
\[
 \mu_{0}(\theta) =
 \begin{cases}
  \frac{2}{3\pi}  & \text{if } \theta \in [\frac{\pi}{2}, \frac{3\pi}{2}) \\
  \frac{1}{3\pi}  & \text{otherwise.}
 \end{cases}
\]
Figure \ref{fig:piecewiseconstant} shows again that the approximation is reasonably accurate even on coarse meshes. Table \ref{tab:piecewiseconstant1d} shows that the rate of convergence in 1-Wasserstein is again 1, while it is around 0.7 in $L^1$.

\begin{figure}
    \centering
    \includegraphics[width=8cm]{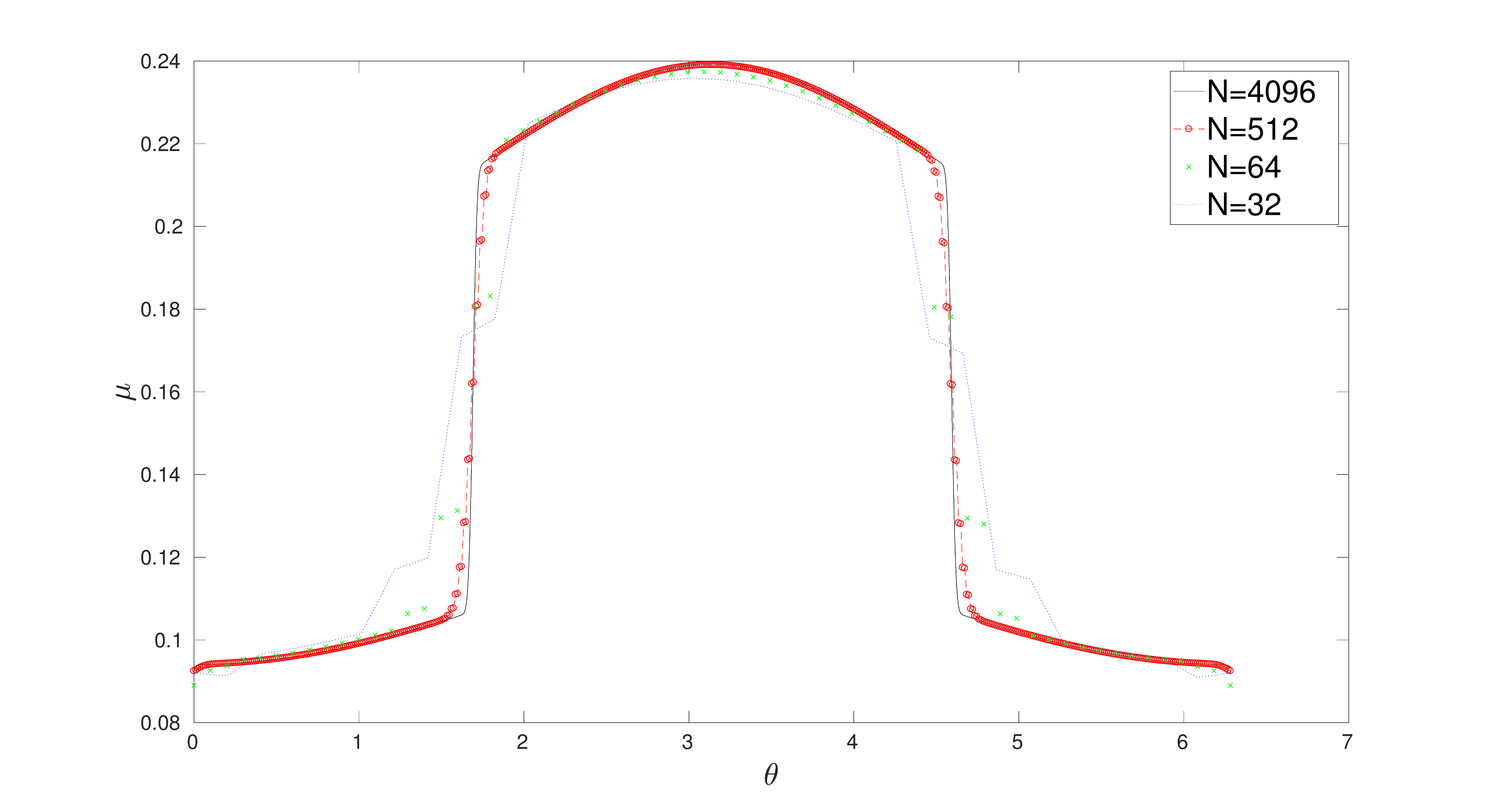}
    \caption{Comparing solutions on different meshes for piecewise constant initial data.}
    \label{fig:piecewiseconstant}
\end{figure} 
\begin{table}
\centering
\begin{tabular}{c|cc|cc}
& \multicolumn{2}{c|}{1-Wasserstein} & \multicolumn{2}{c}{$L^1$} \\
$N$   & Error   & EOC & Error   & EOC \\ \hline
32  & 0.0517 &  & 0.0610  &  \\
64  & 0.0258 & 1.00                  & 0.0315                                            & 0.95                                           \\
128 & 0.0131 & 0.98                  & 0.0196                                            & 0.69                                           \\ 
256 & 0.0067 & 0.97                  & 0.0123                                            & 0.67                                           \\ 
512 & 0.0034 & 0.98                  & 0.0071                                            & 0.79                                           \\
\end{tabular}
\caption{Errors and EOC in the $1$-Wasserstein and $L^1$ distances at $t=0.5$ for piecewise constant initial data.}
\label{tab:piecewiseconstant1d}
\end{table}

\subsubsection{Singular initial data}\label{sec:numexp3}
The final experiment uses the singular initial data
\[
 \mu_{0} = \frac{1}{4} \big(\delta_{\frac{3\pi}{4}} + \delta_{\frac{5\pi}{4}}\big) + \frac{1}{2} \chi_{[\frac{\pi}{2}, \frac{3\pi}{2}]} 
\]
where $\chi_{[a,b]}$ is the indicator function of the interval $[a,b]$. As shown in Figure \ref{fig:diracinitial}, the numerical approximation is nonoscillatory and seems to converge, even in the presence of Dirac singularities. This is confirmed in Table \ref{tab:diracinitial}: The method seems to converge at a rate of around $3/4$ in $d_W$, while it does not converge at all in $L^1$, as expected.

\begin{figure}
    \centering
    \includegraphics[width=8cm]{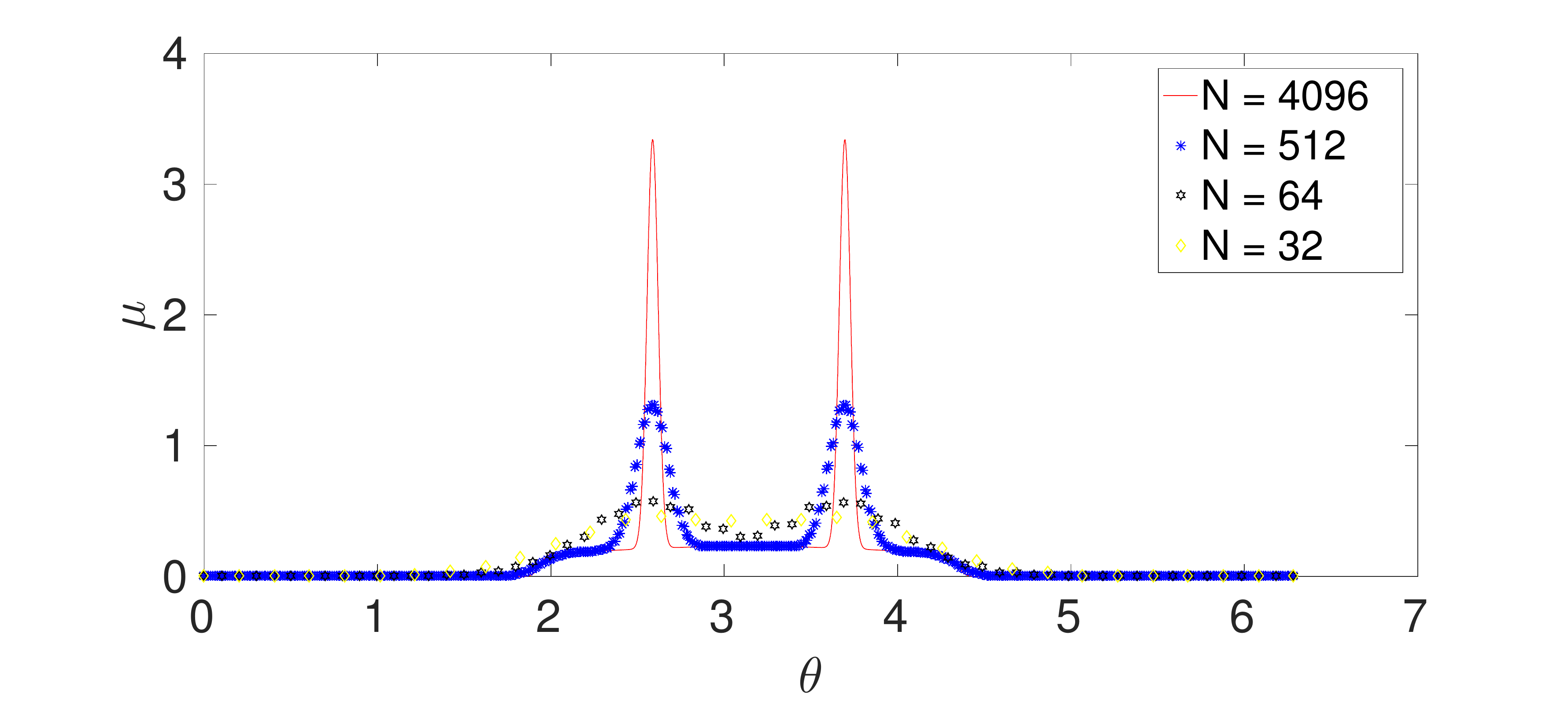}
    \caption{Comparing solutions on different meshes at $t=0.5$ for initial data with Dirac masses.}
    \label{fig:diracinitial}
\end{figure}
\begin{table}
\centering
\begin{tabular}{c|cc|cc}
& \multicolumn{2}{c|}{1-Wasserstein} & \multicolumn{2}{c}{$L^1$} \\
$N$   & Error   & EOC & Error   & EOC \\ \hline
32  & 0.1737 &  & 0.8703                                            &                                                \\ 
64  & 0.1119 & 0.63                  & 0.7945                                            & 0.13                                           \\
128 & 0.0687 & 0.70                  & 0.7131                                            & 0.16                                           \\ 
256 & 0.0433 & 0.67                  & 0.6153                                            & 0.21                                           \\ 
512 & 0.0260 & 0.74                  & 0.4861                                           & 0.34                                           \\
\end{tabular}
\caption{Errors and EOC in the $1$-Wasserstein and $L^1$ distances at $t=0.5$ for singular initial data.}
\label{tab:diracinitial}
\end{table}

\subsection{Polynomial initial data in 2-D}\label{sec:numexp4}
The final numerical experiment approximates the two-dimensional Kuramoto equation \eqref{eq:ks}. We consider the piecewise linear initial data
%
%
\[
 \mu_{0}(\theta,\Omega) = 
    \begin{cases}
      \frac{64}{3\pi^{2}} \theta\Omega & \text{if } \theta\in[\frac{\pi}{4},\frac{\pi}{2}) \text{ and } \Omega \in [0,1] \\
      0 & \text{otherwise}.
 \end{cases}
\]
Figure \ref{fig:2d} shows the solution at $t=0.5$ and Table \ref{tab:2d} shows the $L^1$ errors. (Due to the complexities of computing the Wasserstein distance for multi-dimensional measures, we only compute the $L^1$ errors in this experiment.) The convergence rate in $L^1$ is seen to be about the same as in the piecewise constant one-dimensional example in Section \ref{sec:numexp2}.

\begin{figure}[h]
    \centering
    \includegraphics[width=8cm]{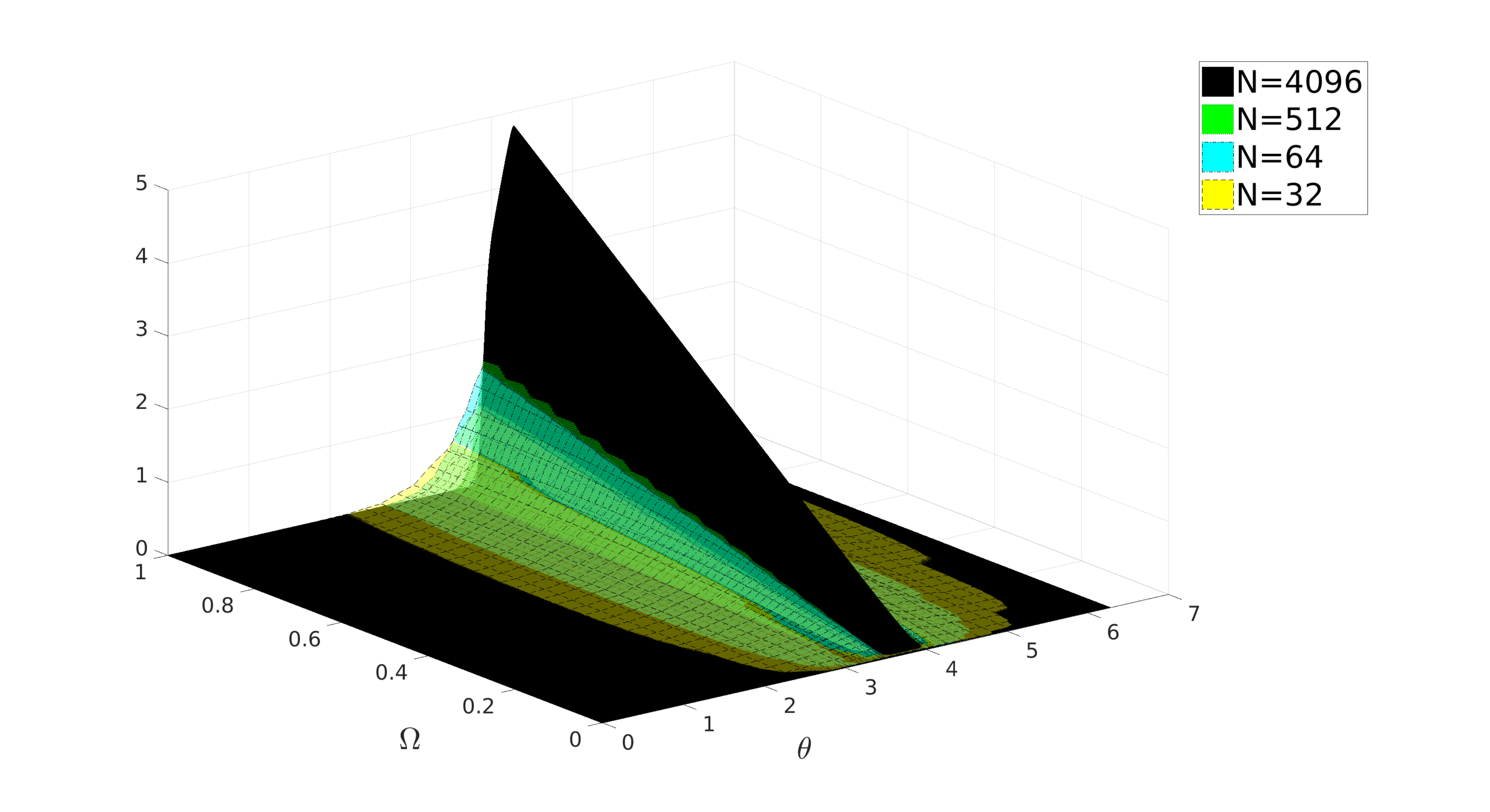}
    \caption{Comparing solutions on different meshes for two-dimensional polynomial initial data.}
    \label{fig:2d}
\end{figure}
\begin{table}[h]
\centering
\begin{tabular}{c|c c}
& \multicolumn{2}{c}{$L^1$} \\
\hline
$N$   & Error   & EOC \\ \hline
32                         & 1.2912                      &      \\
64                         & 1.0054                      & 0.36                      \\
128                        & 0.7346                      & 0.45                      \\
256                        & 0.4925                      & 0.58                      \\
512                        & 0.2999                      & 0.72                      \\
1024 & 0.1638 & 0.87 \\
\end{tabular}
\caption{Table of $L^{1}$-errors for two-dimensional initial data.}
\label{tab:2d}
\end{table}

\appendix
\section{Appendix}
\begin{proof}[Proof of Lemma \ref{lem:compactness}]
We claim that $K$ is tight. Define 
\[
\chi(x):=\begin{cases}
1 & |x|\leq1 \\
2-|x| & 1<|x|<2 \\
0 & 2\leq|x|,
\end{cases}
\qquad \chi_M(x) := \chi(x/M).
\]
For an $\eps>0$, let $M>0$ be such that $\bar{\mu}(\R\setminus B_M(0)) \leq \eps$ and let $C:=\sup_{\mu\in K}d_W(\mu,\bar{\mu})$. If $\mu\in K$ then
\begin{align*}
\mu\big(\R\setminus B_{2M}(0)\big) &\leq \ip{\mu}{1-\chi_M} = \ip{\bar{\mu}}{1-\chi_M} + \ip{\mu-\bar{\mu}}{1-\chi_M} \\
&\leq \bar{\mu}(\R\setminus B_M(0)) + \|1-\chi_M\|_{\Lip}d_W(\mu,\bar{\mu}) \\
&\leq \eps + \frac{C}{M},
\end{align*}
which can be made arbitrarily small, independently of $\mu$. Hence, $K$ is tight, so by Prokhorov's theorem, $K$ is relatively compact in the topology of narrow (or ``weak'') convergence. But narrow convergence is equivalent to $d_W$-convergence. This completes the proof.
\end{proof}

\end{document}